\documentclass[11pt]{amsart}

\usepackage{AFBoix2015}

\begin{document}

\author[A.\,F.\,Boix]{Alberto F.\,Boix}
\address{Department of Mathematics, Universitat Polit\`ecnica de Catalunya BarcelonaTech, Av. Eduard Maristany 16, 08019, Barcelona, Spain.}
\email{alberto.fernandez.boix@upc.edu}

\author[B.\,K.\,Lima--Pereira]{B\'arbara K.\,Lima--Pereira}
\address{Universidade Federal de Lavras - C\^{a}mpus S\~{a}o Sebasti\~{a}o do Para\'iso - MG, Rua Ant\^{o}nio Carlos Pinheiro de Alc\^{a}ntara, 855 - Jardim Mediterran\'{e} - CEP: 37950-000, Minas Gerais, Brazil.}
\email{barbarapereira@ufla.br}

\keywords{Tor modules, Buchsbaum--Eisenbud--Horrocks conjecture}

\subjclass[2020]{Primary 13D02}

\title[A local variant of the BEH conjecture]{Exploring a local variant of the Buchsbaum--Eisenbud--Horrocks conjecture}

\begin{abstract}
The Buchsbaum--Eisenbud--Horrocks conjecture has attracted the attention of many researchers working in Commutative Algebra and Algebraic Geometry in the last fifty years. Quite recently, a variant of this conjecture has been formulated by Lima--Pereira, Nu{\~n}o--Ballesteros, Orefice--Okamoto and Tomazella in their study of complete intersection singularities. The purpose of this paper is, on the one hand, to exhibit some cases where this conjecture holds. On the other hand, we show that the conjecture fails in general by exhibiting some counterexamples. Finally, we formulate a conjecture that can be regarded as a local variant of the Buchsbaum--Eisenbud--Horrocks' one.
\end{abstract}

\keywords{Tor modules, Buchsbaum--Eisenbud--Horrocks conjecture}

\maketitle

\section*{Introduction}
The Buchsbaum--Eisenbud--Horrocks conjecture, formulated at the end of the $80$'s independently by Buchsbaum and Eisenbud \cite[1.4]{BuchsbaumEisenbudstructurefreeres} and Horrocks \cite[Problem 24]{BEHhorrocks}, can be stated in the following way.

\begin{con*}[Buchsbaum--Eisenbud--Horrocks]
Let $R$ be a commutative Noetherian ring such that $\operatorname{Spec}(R)$ is connected, and let $M$ be a non--zero, finitely generated $R$--module of finite projective dimension. Then, for any finite projective resolution of $M$
\[
\xymatrix{0\ar[r]& P_d\ar[r]& \ldots\ar[r]& P_1\ar[r]& P_0\ar[r]& M\ar[r]& 0,}
\]
we have
\[
\rank_R (P_i)\geq\binom{c}{i},
\]
where $c=\height ((0:_R M))$.
\end{con*}

The interested reader in the Buchsbaum--Eisenbud--Horrocks conjecture may like to consult \cite{BoocherGrifosurveyBEH}, a nice survey of what is known and unknown about this problem, especially when $R$ is a polynomial ring in several variables over a field. We also want to mention that in \cite[Conjecture 4.8 and Lemma 4.9]{BoocherGrifosurveyBEH}, it is observed that, in order to prove the Buchsbaum--Eisenbud--Horrocks problem in the case where $R$ is a local ring, it is enough to restrict the attention to the class of modules with finite length.

The purpose of this paper is to study the following conjecture, formulated by Lima--Pereira, Nu{\~n}o--Ballesteros, Orefice--Okamoto and Tomazella in their study of complete intersection singularities (see \cite[Conjecture 4.9]{conjectureTor}). It may be regarded as a variant of the Buchsbaum--Eisenbud--Horrocks conjecture in the local case.

\begin{con}[Lima--Pereira, Nu{\~n}o--Ballesteros, Orefice--Okamoto, Tomazella]\label{conjecture length Tor}
Let $(R,\mathfrak{m})$ be a regular local ring of dimension $n$, let $I$ be an ideal generated by an $R$--regular sequence of length $k\leq n,$ and let $J$ be an ideal generated by a regular sequence of length $n.$ Assume, in addition, that $I+J$ is $\mathfrak{m}$--primary. Then, if $\ell$ denotes the length function in the category of $R$--modules, then
\[
\ell\left(\Tor_i^R (R/I,R/J)\right)=\binom{k}{i}\ell\left(\frac{R}{I+J}\right)
\]
for any $0\leq i\leq k.$
\end{con}
We have two main goals in this paper: on the one hand, we give particular settings where Conjecture \ref{conjecture length Tor} holds. On the other hand, we show that it is in general not true by exhibiting some counterexamples. At the end, we formulate a more general conjecture, exhibiting some particular examples where it holds.

Now, let us provide a more detailed overview of the contents of this paper for the convenience of the reader. In Section \ref{section: some positive results}, we present some general settings where Conjecture \ref{conjecture length Tor} holds. More precisely, we can phrase the main results obtained in Section \ref{section: some positive results} as follows.

\begin{teo}\label{summary of main positive results}
Conjecture \ref{conjecture length Tor} holds if at least one of the following assumptions holds.

\begin{enumerate}[(i)]

\item\ Either $I\subseteq J$ or $J\subseteq I$ (see Proposition \ref{the conjecture when one has some inclusion}).

\item\ If $k=2$ (see Discussion \ref{alternative proof of the conjecture for k equal two}).

\item\ $J=(f_1,\ldots,f_n)$, $(f_1,\ldots,f_k)\subseteq I$ for some $1\leq k\leq n-1$, and $f_{k+1},\ldots,f_n$ is an $(R/I)$--regular sequence (see Proposition \ref{the conjecture in good position}).

\item\ $n\geq 4$, $J=(f_1,\ldots,f_n)$, $(f_1,\ldots,f_{k-1})\subseteq I$ for some $3\leq k\leq n-1$, and $f_{k+1},\ldots,f_n$ is an $(R/I)$--regular sequence (see Proposition \ref{the conjecture in quasi good position}).

\end{enumerate}
\end{teo}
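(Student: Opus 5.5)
The plan is to compute $\Tor_\bullet^R(R/I,R/J)$ with the Koszul complex on a regular sequence and reduce all length computations to the Koszul homology of a regular sequence on a Cohen--Macaulay module. Write $I=(g_1,\ldots,g_k)$ and $J=(f_1,\ldots,f_n)$. Since $g$ is $R$--regular, $K_\bullet(g;R)$ resolves $R/I$, so
\[
\Tor_i^R(R/I,R/J)\cong H_i(g;R/J).
\]
Symmetrically, $\Tor_i^R(R/I,R/J)\cong H_i(f;R/I)$. The module $R/J$ is Artinian, and $R/I$ is Cohen--Macaulay of dimension $n-k$.

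\textbf{Case (i).} Suppose first $I\subseteq J$. Then $g$ acts as zero on $R/J$, so every Koszul differential of $K_\bullet(g;R/J)$ vanishes. Hence $H_i=(R/J)^{\binom{k}{i}}$, and $R/J=R/(I+J)$, which gives the formula. Suppose instead $J\subseteq I$. Then $k\ge n$ (compare heights), so $k=n$. Now $f$ kills $R/I$, the Koszul differentials of $K_\bullet(f;R/I)$ vanish, and $H_i(f;R/I)=(R/I)^{\binom{n}{i}}$ with $R/I=R/(I+J)$.

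\textbf{Case (iii).} Use $H_i(f;R/I)$ and the standard splitting of Koszul complexes, $K(f)=K(f_1,\ldots,f_k)\otimes K(f_{k+1},\ldots,f_n)$. Since $f_{k+1},\ldots,f_n$ is $R/I$--regular, $K(f_{k+1},\ldots,f_n;R/I)$ is quasi-isomorphic to $\overline{R}:=R/(I+(f_{k+1},\ldots,f_n))=R/(I+J)$, the last equality because $f_1,\ldots,f_k\in I$. Therefore
\[
H_\bullet(f;R/I)\cong H_\bullet(f_1,\ldots,f_k;\overline{R}).
\]
Since each $f_j$ with $j\le k$ lies in $I$, these elements are zero on $\overline{R}$. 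The differentials vanish, and $H_i=\overline{R}^{\binom{k}{i}}$.

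\textbf{Case (iv).} Here only $f_1,\ldots,f_{k-1}$ lie in $I$, so after the same reduction $f_1,\ldots,f_{k-1}$ vanish on $\overline{R}'=R/(I+(f_{k+1},\ldots,f_n))$, but $f_k$ need not. Then
\[
H_\bullet(f;R/I)\cong H_\bullet(f_k;\overline{R}')\otimes \Lambda^\bullet(\overline{R}'^{\,k-1}),
\]
where the exterior algebra has zero differential. Here $\overline{R}'$ is Artinian, and $H_0(f_k;\overline{R}')=\overline{R}'/f_k\overline{R}'=R/(I+J)$, while $H_1(f_k;\overline{R}')=(0:_{\overline{R}'}f_k)$. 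By the exact sequence $0\to(0:f_k)\to\overline{R}'\xrightarrow{f_k}\overline{R}'\to\overline{R}'/f_k\to0$, both have length $\ell(R/(I+J))$. Hence
\[
\ell(H_i)=\left[\binom{k-1}{i}+\binom{k-1}{i-1}\right]\ell(R/(I+J))=\binom{k}{i}\ell(R/(I+J)).
\]
This argument in fact seems to work for any $k$. I would double check why the paper imposes $n\ge4$ and $3\le k$, presumably only to separate this case from (ii) and (iii).

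\textbf{Case (ii), $k=2$.} Use $H_\bullet(g_1,g_2;R/J)$ with $M=R/J$ Artinian. Since the Euler characteristic of Koszul homology on a finite-length module vanishes, $\ell(H_0)-\ell(H_1)+\ell(H_2)=0$. Also $H_0=R/(I+J)$. For $H_2=(0:_M I)$, Matlis duality on the Gorenstein Artinian ring $M=R/J$ gives $\ell(0:_M I)=\ell(M/IM)=\ell(R/(I+J))$. Then $\ell(H_1)=2\ell(R/(I+J))$, as required.

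The main obstacle is (ii): one must justify the duality step, namely that $R/J$ is Gorenstein (it is a complete intersection) and that $\ell(0:_M\mathfrak{a})=\ell(M/\mathfrak{a}M)$ for any ideal $\mathfrak{a}$ there. The same duality (self-duality of Koszul homology, $H_i\cong H_{k-i}^\vee$) is what I would try in order to push beyond these cases.
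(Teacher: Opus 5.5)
Your proposal is correct and follows essentially the paper's route. The paper handles (i) by zero Koszul differentials, (iii) by the reduction $H_\bullet(\mathbf{f};R/I)\cong H_\bullet(f_1,\ldots,f_k;R/(I+J))$ (its alternative proof via Vasconcelos), (iv) by splitting off the $k-1$ zero entries (its use of Huneke's trick), and (ii) by an Euler-characteristic count together with $\ell(\Tor_2)=\ell(\Tor_0)$, exactly as you do. The only differences are in how the length equalities are justified. Where the paper invokes the Miller--Rahmati--Striuli duality, you use Matlis duality on the Gorenstein Artinian ring $R/J$ in (ii). In (iv) you use the elementary fact that the kernel and cokernel of multiplication by $f_k$ on a finite-length module have equal length; this needs no Gorenstein hypothesis and confirms your suspicion that the restrictions $k\ge 3$ and $n\ge 4$ are inessential.
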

Some remarks are in order here. First, part (i) of Theorem \ref{summary of main positive results} was already proved by Borna \cite{Borna2008} in his Ph.D. thesis. Second, part (ii) of Theorem \ref{summary of main positive results} was proved in \cite{conjectureTor}. However, in this paper we present alternative proofs of these cases, which allow us to obtain more general statements, as the ones obtained in parts (iii) and (iv) of Theorem \ref{summary of main positive results}.

In Section \ref{the monomial case section}, we will show that Conjecture \ref{conjecture length Tor} always holds for a certain subclass of complete intersection monomial ideals (see Notation \ref{notation in the monomial setting}). In Section \ref{section: some counterexamples}, we exhibit explicit counterexamples to Conjecture \ref{conjecture length Tor} showing, in particular, that this conjecture cannot hold for any complete intersection binomial ideal.

Section \ref{section: singularity theory} can be independently read from the rest of the paper. In this section, we briefly review the motivation for studying Conjecture \ref{conjecture length Tor} from the perspective of Singularity Theory. Finally, in Section \ref{more general conjecture section}, we formulate a more general conjecture, that we feel is the real generalization of the Buchsbaum--Eisenbud--Horrocks' one in this setting. We also raise the corresponding Total Rank Conjecture in this local framework, showing that it cannot be deduced directly from the celebrated papers by VandeBogert and Walker \cite{WalkerTRCodd,WalkerTRCeven} proving the classical Total Rank Conjecture.


Throughout this paper, all the rings are commutative with unity. Moreover, given a commutative ring $R$, $\ell_R$ (or simply $\ell$ if the context is clear) will always denote the length function in the category of $R$--modules. Finally, by a local ring $(R,\mathfrak{m})$ we mean a commutative Noetherian ring $R$ with a unique maximal ideal $\mathfrak{m}$. 

\section{Some positive general results}\label{section: some positive results}

The goal of this section is to provide several situations where Conjecture \ref{conjecture length Tor} has a positive answer. We start by reviewing an elementary fact about the behavior of the length function for the sake of completeness, see \cite[page 15, 1.2.25]{BrunsHerzog1993} and \cite[\href{https://stacks.math.columbia.edu/tag/02M1}{Tag 02M1}]{stacks-project} for a proof.

\begin{lm}\label{length and flat base change}
Let $\xymatrix@1{(R,\mathfrak{m}_R)\ar[r]^-{\phi}& (S,\mathfrak{m}_S)}$ be a homomorphism of local rings, and let $N$ be an $S$--module which is flat as $R$--module ($N$ is regarded as $R$--module by restriction of scalars under $\phi$) such that $\ell_S (N/\mathfrak{m}_R N)<\infty$. Then, for any $R$--module $M$ of finite length, we have
\[
\ell_S (M\otimes_R N)=\ell_R (M)\cdot\ell_S\left(\frac{N}{\mathfrak{m}_R N}\right).
\]
\end{lm}
The next thing we plan to show is that there is no loss of generality in assuming in Conjecture \ref{conjecture length Tor} that $R$ is a complete regular local ring. This is a consequence of the following technical statement.

\begin{prop}\label{conjecture tor length and base change}
Let $(R,\mathfrak{m})$ be a local ring of dimension $n$, and let $\xymatrix@1{R\ar[r]^-{\phi}& S}$ be a flat local ring homomorphism, with $(S,\mathfrak{m}_S)$ a regular local ring, such that $\dim (S)=\dim (R)$ and that $\ell_S (S/\mathfrak{m}S)<\infty$. Assume that Conjecture \ref{conjecture length Tor} holds for $S$. Then, Conjecture \ref{conjecture length Tor} is also valid for $R$.
\end{prop}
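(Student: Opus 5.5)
The plan is to transport the data of Conjecture \ref{conjecture length Tor} from $R$ to $S$ along $\phi$, apply the conjecture in $S$, and then descend the resulting equality of lengths back to $R$ using Lemma \ref{length and flat base change} with $N=S$. First I will note that $R$ is regular, which the conjecture requires. This holds either because the statement is implicitly about regular $R$, or because regularity descends along flat local homomorphisms (Matsumura, Theorem 23.7). I will also note that $\phi$, being flat and local, is faithfully flat. Hence $e:=\ell_S(S/\mathfrak{m}S)$ is a positive integer, finite by hypothesis.

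Now fix $I=(x_1,\ldots,x_k)$ and $J=(y_1,\ldots,y_n)$ as in Conjecture \ref{conjecture length Tor}, with $x$ and $y$ regular sequences on $R$ and $I+J$ $\mathfrak{m}$--primary. I will check three things about the extended ideals.

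\begin{enumerate}[(a)]
\item The images $\phi(x_1),\ldots,\phi(x_k)$ form an $S$--regular sequence. By flatness, $H_i(\phi(x);S)\cong H_i(x;R)\otimes_R S=0$ for $i>0$, and $S/IS\neq 0$ by faithful flatness. Since $S$ is local, vanishing of the higher Koszul homology gives regularity. The same argument applies to $\phi(y_1),\ldots,\phi(y_n)$, a regular sequence of length $n=\dim S$; this is exactly where the hypothesis $\dim S=\dim R$ is used.

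\item The ideal $IS+JS=(I+J)S$ is $\mathfrak{m}_S$--primary. Indeed,
\[
S/(I+J)S\cong R/(I+J)\otimes_R S,
\]
which has finite length $e\,\ell_R(R/(I+J))$ by Lemma \ref{length and flat base change}. Thus the hypotheses of Conjecture \ref{conjecture length Tor} hold in $S$.

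\item By flat base change for Tor,
\[
\Tor_i^S(S/IS,S/JS)\cong \Tor_i^R(R/I,R/J)\otimes_R S .
\]
Moreover, $\Tor_i^R(R/I,R/J)$ is finitely generated and annihilated by the $\mathfrak{m}$--primary ideal $I+J$, so it has finite length.
\end{enumerate}

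Applying Lemma \ref{length and flat base change} to both sides and invoking the conjecture for $S$ then gives
\[
e\,\ell_R\bigl(\Tor_i^R(R/I,R/J)\bigr)=\ell_S\bigl(\Tor_i^S(S/IS,S/JS)\bigr)=\binom{k}{i}\ell_S\bigl(S/(I+J)S\bigr)=\binom{k}{i}\,e\,\ell_R\bigl(R/(I+J)\bigr).
\]
Cancelling $e\geq 1$ yields the conjecture for $R$.

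The main point requiring care is step (a), namely that regular sequences remain regular of the same length in $S$; the Koszul/flatness argument handles it. The only other delicate input is the descent of regularity from $S$ to $R$, for which I would simply cite the standard theorem. Everything else is formal bookkeeping with Lemma \ref{length and flat base change}.
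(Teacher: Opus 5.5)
Your proposal is correct and follows the paper's proof essentially step for step: descent of regularity to $R$, preservation of the two regular sequences under the flat extension (the paper cites Bruns--Herzog, Proposition 1.1.2, where you give the Koszul argument), $\mathfrak{m}_S$--primarity of $(I+J)S$ via Lemma \ref{length and flat base change}, flat base change for Tor, and finally cancellation of $\ell_S(S/\mathfrak{m}S)$. At that last step you are slightly more careful than the paper: you note that the cancellation needs $e\geq 1$, which holds because $\phi$ is local, and not merely $e<\infty$.
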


\begin{proof}
\ First, we observe that, since $S$ is regular and flat over $R$ we have, by \cite[page 162, Theorem 51]{Matsumuratex}, that $R$ is also regular. Now, let $I$ be an ideal of $R$, generated by a regular sequence of length $k\leq n,$ and let $J$ be an ideal of $R$ generated by a regular sequence of length $n.$ Assume, in addition, that $I+J$ is $\mathfrak{m}$--primary. By \cite[Proposition 1.1.2]{BrunsHerzog1993}, we know that $IS$ and $JS$ are ideals of $S$ generated by regular sequences of lengths $k$ and $n$ respectively. Moreover, since $I+J$ is $\mathfrak{m}$--primary, we also have, since by assumption $\phi$ is flat with closed fiber of finite length, that $(I+J)S=IS+JS$ is also $\mathfrak{m}_S$--primary because of Lemma \ref{length and flat base change}. In this way, since Conjecture \ref{conjecture length Tor} holds for $S$ we have
\[
\ell_S\left(\Tor_i^S (S/IS,S/JS)\right)=\binom{k}{i}\ell_S\left(\frac{S}{(I+J)S}\right).
\]
Notice that this equality is equivalent to the following one, because $\phi$ is flat \cite[Corollary 10.61]{Rotman2009}.
\begin{equation}\label{length from S to R}
\ell_S\left(\Tor_i^R (R/I,R/J)\otimes_R S\right)=\binom{k}{i}\ell_S\left(\frac{R}{I+J}\otimes_R S\right).
\end{equation}
Now, on the one hand we observe that, by assumption, $\ell_S (S/\mathfrak{m}S)<\infty$. On the other hand, since $I+J$ is $\mathfrak{m}$--primary, and $\Tor_i^R (R/I,R/J)$ is a finitely generated $R$--module that is annihilated by $I+J$, we have, by \cite[Tag 00J0]{stacks-project}, that $\Tor_i^R (R/I,R/J)$ has finite length. In this way, keeping in mind these facts, we can use Lemma \ref{length and flat base change} again to rewrite \eqref{length from S to R} as follows.
\[
\ell_R\left(\Tor_i^R (R/I,R/J)\right)\cdot\ell_S(S/\mathfrak{m}S)=\binom{k}{i}\ell_R\left(\frac{R}{I+J}\right)\cdot\ell_S (S/\mathfrak{m}S).
\]
And this equality is equivalent, using again our assumption that $\ell_S (S/\mathfrak{m}S)<\infty$, to
\[
\ell_R\left(\Tor_i^R (R/I,R/J)\right)=\binom{k}{i}\ell\left(\frac{R}{I+J}\right),
\]
which is precisely what we want to prove.
\end{proof}

\begin{cor}\label{reduction to the complete case}
In Conjecture \ref{conjecture length Tor} we can assume, without loss of generality, that $(R,\mathfrak{m})$ is a regular local ring, complete with respect to the $\mathfrak{m}$--adic topology. Equivalently, by Cohen structure theorem \cite[pages 217--218]{Matsumuratex}, we can assume that $R$ is a formal power series ring with coefficients either on a field or over a Cohen ring.
\end{cor}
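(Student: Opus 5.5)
The plan is to apply Proposition \ref{conjecture tor length and base change} with $S=\widehat{R}$, the $\mathfrak{m}$--adic completion of $R$, and $\phi\colon R\to\widehat{R}$ the canonical map. Suppose the conjecture holds for every complete regular local ring; then it should hold for every regular local ring $R$. So I only need to check that $\phi$ satisfies the hypotheses of Proposition \ref{conjecture tor length and base change}, and then invoke the Cohen structure theorem for the second formulation.

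The hypotheses to check are standard facts about completion of a Noetherian local ring $(R,\mathfrak{m})$.

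\begin{enumerate}[(a)]
\item $\widehat{R}$ is a local ring with maximal ideal $\mathfrak{m}\widehat{R}$, and $\phi$ is a local homomorphism.
\item $\phi$ is flat.
\item $\dim(\widehat{R})=\dim(R)$, since $\widehat{R}$ and $R$ share the same associated graded ring with respect to $\mathfrak{m}$, hence the same Hilbert--Samuel function.
\item $\widehat{R}/\mathfrak{m}\widehat{R}\cong R/\mathfrak{m}$, so $\ell_{\widehat{R}}(\widehat{R}/\mathfrak{m}\widehat{R})=1<\infty$.
\item $\widehat{R}$ is regular whenever $R$ is: the embedding dimension is also preserved, because $\mathfrak{m}\widehat{R}/\mathfrak{m}^2\widehat{R}\cong\mathfrak{m}/\mathfrak{m}^2$, and by (c) the dimension is preserved.
\end{enumerate}

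Now fix a regular local ring $R$ of dimension $n$, and assume the conjecture holds for all complete regular local rings. By (e), $\widehat{R}$ is a complete regular local ring, so the conjecture holds for $S=\widehat{R}$. By (a)--(d), Proposition \ref{conjecture tor length and base change} applies and transfers the conjecture to $R$. This reduction also keeps the length $k$ of the regular sequence generating $I$ unchanged, since the proof of Proposition \ref{conjecture tor length and base change} shows $IS$ is generated by a regular sequence of the same length.

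For the equivalent formulation, Cohen's structure theorem describes a complete regular local ring in two cases. In the equicharacteristic case it is a formal power series ring $K[[x_1,\ldots,x_n]]$ over a field. In the mixed characteristic case it is a quotient of $C[[x_1,\ldots,x_n]]$ by a single element, where $C$ is a Cohen ring. In the unramified case this quotient is itself a power series ring over $C$ in $n-1$ variables. The ramified case is what the phrase ``or over a Cohen ring'' should be read as covering: such a ring is a quotient $C[[x_1,\ldots,x_n]]/(p-f)$ with $f\in\mathfrak{m}^2$, and the statement must be understood in that sense.

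The main point of care is this last formulation, not the reduction itself. If one insists on literal power series rings in the ramified case, one cannot simply pass to a further flat extension: adjoining a variable would raise the dimension, so Proposition \ref{conjecture tor length and base change} would not apply directly. I would therefore state the equivalence as ``complete regular local ring'' and cite Cohen's theorem for its explicit description. The first assertion, which is the content needed later, follows from verifying (a)--(e) alone.
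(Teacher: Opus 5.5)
Your proposal is correct and is the intended argument: the paper gives no separate proof of this corollary, treating it as an immediate application of Proposition \ref{conjecture tor length and base change} to the completion map $R\to\widehat{R}$, and your checks (a)--(e) are exactly the hypotheses that application requires. Your caveat about the mixed characteristic ramified case is also correct and sharper than the paper's wording. For instance, $\mathbb{Z}_p[[x,y]]/(p-xy)$ is a complete regular local ring with $p\in\mathfrak{m}^2$, so it is not a power series ring over a Cohen ring. The ``equivalently'' clause is therefore accurate only if read as ``complete regular local ring, as described by Cohen's structure theorem,'' as you propose.
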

We also need a graded variant of Proposition \ref{conjecture tor length and base change}, which we formulate now. The following statement will be useful for us along Section \ref{the monomial case section}.

\begin{prop}\label{graded conjecture tor length and base change}
Let $\mathbb{K}$ be a field, let $S:=\mathbb{K}[x_1,\ldots,x_n]$ be the polynomial ring in $n$ variables over $\mathbb{K}$, regarded as graded ring with its natural standard $\mathbb{Z}$--grading. Let $I,J\subset S$ be ideals generated by regular sequences of length $k$ and $n$ respectively such that both regular sequences are made up by homogeneous elements, and set $\mathfrak{m}:=(x_1,\ldots,x_n)$. Moreover, assume that $\sqrt{I+J}=\mathfrak{m}$. Then, Conjecture \ref{conjecture length Tor} holds for $S$, $I$ and $J$ if and only if it holds for $R:=S_{\mathfrak{m}}$, $IR$ and $JR$.    
\end{prop}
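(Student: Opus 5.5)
The plan is to show that every module appearing in the conjecture for $S$ is already $\mathfrak{m}$--local, so that localizing at $\mathfrak{m}$ changes nothing: both the Tor modules and $S/(I+J)$ should have the same lengths over $S$ and over $R=S_{\mathfrak{m}}$. First I have to say what the conjecture means for the graded ring $S$. Here $n=\dim S$, $I$ is generated by a homogeneous regular sequence of length $k$, $J$ by one of length $n$, and $\sqrt{I+J}=\mathfrak{m}$ replaces the $\mathfrak{m}$--primary hypothesis. Lengths are taken as $S$--modules.

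Next I check that $IR$ and $JR$ meet the hypotheses of Conjecture \ref{conjecture length Tor} over the regular local ring $R$ of dimension $n$.
\begin{itemize}
\item Localization is flat, so the images of the two sequences remain regular on $R$. The quotients stay nonzero because the generators are homogeneous of positive degree and hence lie in $\mathfrak{m}$; the positive degree holds since the sequences are regular and proper.
\item This gives regular sequences in $R$ of lengths $k$ and $n$.
\item Since $\sqrt{I+J}=\mathfrak{m}$, the ideal $(I+J)R$ is $\mathfrak{m}R$--primary.
\end{itemize}

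The key comparison comes from flatness of $S\to R$, which gives
\[
\Tor_i^R(R/IR,R/JR)\cong \Tor_i^S(S/I,S/J)\otimes_S R=\Tor_i^S(S/I,S/J)_{\mathfrak{m}},
\]
and similarly $R/(I+J)R\cong (S/(I+J))_{\mathfrak{m}}$. Each $T_i:=\Tor_i^S(S/I,S/J)$ is finitely generated and annihilated by $I+J$. Hence its support lies in $V(I+J)=\{\mathfrak{m}\}$, so $T_i$ has finite length and every composition factor is isomorphic to $S/\mathfrak{m}$.

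The remaining point is that localization at $\mathfrak{m}$ preserves length for a finite-length $S$--module $M$ supported only at $\mathfrak{m}$. I would prove this by induction on a composition series. Localization is exact, and $(S/\mathfrak{m})_{\mathfrak{m}}\cong R/\mathfrak{m}R$ is simple, so $\ell_R(M_{\mathfrak{m}})=\ell_S(M)$. Equivalently, $M\cong M_{\mathfrak{m}}$ because elements outside $\mathfrak{m}$ act invertibly on a module killed by a power of $\mathfrak{m}$.

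Applying this to $T_i$ and to $S/(I+J)$ shows that the two sides of the equality in the conjecture coincide for $(S,I,J)$ and for $(R,IR,JR)$, for each $i$. So one equality holds exactly when the other does.

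The only real obstacle is bookkeeping rather than mathematics. I must make sure that the graded hypotheses carry over to regular-sequence hypotheses in $R$ with the same lengths, which is handled by the flatness argument above.
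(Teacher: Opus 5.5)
Your proof is correct and follows essentially the same route as the paper's. Both arguments transfer the regular-sequence hypotheses along the flat map $S\to S_{\mathfrak{m}}$, identify $\Tor_i^R(R/IR,R/JR)$ and $R/(I+J)R$ with the localizations at $\mathfrak{m}$ of the corresponding $S$-modules, and use that localization at $\mathfrak{m}$ preserves length for finite-length modules supported only at $\mathfrak{m}$. You additionally make explicit two points the paper leaves to its citations: that homogeneous regular generators have positive degree and so lie in $\mathfrak{m}$, and that Tor commutes with flat base change.
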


\begin{proof}
\ Once again, by \cite[Proposition 1.1.2]{BrunsHerzog1993}, $IR$ and $JR$ are also generated by regular sequences in $R$ of lengths $k$ and $n$ respectively. Moreover, by \cite[Proposition 1.5.15]{BrunsHerzog1993}, $-\otimes_S R$ is faithfully flat in the category of graded $S$--modules. Since both $\Tor_i^S (S/I,S/J)$ and $S/(I+J)$ are finite length, graded $S$--modules supported at $\mathfrak{m}$, and for these modules localization at $\mathfrak{m}$ preserves length, we have that
\[
\ell_S\left(\Tor_i^S (S/I,S/J)\right)=\binom{k}{i}\ell\left(\frac{S}{I+J}\right)
\]
if and only if
\[
\ell_R\left(\Tor_i^R (R/IR,R/JR)\right)=\binom{k}{i}\ell\left(\frac{R}{(I+J)R}\right),
\]
just what we want to prove.
\end{proof}

As already pointed out in \cite{conjectureTor}, a particular case of Conjecture \ref{conjecture length Tor} was proved in \cite[Lemma 3.4.1]{Borna2008}; in the next result we reproduce the proof presented there for the convenience of the reader.

\begin{prop}\label{the conjecture when one has some inclusion}
Conjecture \ref{conjecture length Tor} holds if either $I\subseteq J$ or $J\subseteq I.$
\end{prop}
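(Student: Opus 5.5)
The plan is to compute $\Tor_i^R(R/I,R/J)$ with a Koszul resolution chosen so that, after tensoring, all the differentials vanish. Two facts drive this. If $\underline{g}=g_1,\ldots,g_r$ is an $R$--regular sequence, the Koszul complex $K_\bullet(\underline{g};R)$ is a free resolution of $R/(\underline{g})$, with $K_i$ free of rank $\binom{r}{i}$. Moreover, every entry of its differentials is $\pm g_j$ or $0$. Hence for any ideal $\mathfrak{a}\supseteq(\underline{g})$, the complex $K_\bullet(\underline{g};R)\otimes_R R/\mathfrak{a}$ has zero differentials. Its homology is therefore $(R/\mathfrak{a})^{\binom{r}{i}}$ in degree $i$.

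\emph{Case $I\subseteq J$.} Here $I+J=J$, and $J$ is $\mathfrak{m}$--primary, being generated by a regular sequence of length $n=\dim R$. Write $I=(g_1,\ldots,g_k)$ with $\underline{g}$ regular and resolve $R/I$ by $K_\bullet(\underline{g};R)$. Tensoring with $R/J$ kills the differentials because $I\subseteq J$, so
\[
\Tor_i^R(R/I,R/J)\cong (R/J)^{\binom{k}{i}}=\left(\frac{R}{I+J}\right)^{\binom{k}{i}}.
\]
Additivity of length on direct sums then gives the required equality for all $0\leq i\leq k$.

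\emph{Case $J\subseteq I$.} Now $I+J=I$, and the hypothesis that $I+J$ is $\mathfrak{m}$--primary forces $\height I=n$. Since $I$ is generated by a regular sequence of length $k$, we have $\height I=k$, so $k=n$. Resolve $R/J$ by the Koszul complex on the $n$ generators of $J$ and tensor with $R/I$. The differentials vanish because $J\subseteq I$, so
\[
\Tor_i^R(R/I,R/J)\cong (R/I)^{\binom{n}{i}}=\left(\frac{R}{I+J}\right)^{\binom{k}{i}}.
\]
Taking lengths finishes this case.

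There is no serious obstacle here. The only point needing care is the second case, where the binomial coefficient comes out as $\binom{n}{i}$ rather than $\binom{k}{i}$. This is resolved by the observation $k=n$, which uses $\height I=k$ for an ideal generated by a regular sequence of length $k$ together with the $\mathfrak{m}$--primary assumption.
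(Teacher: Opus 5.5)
Your proposal is correct and matches the paper's proof: in each case you resolve by the Koszul complex on the generators of the smaller ideal, and tensoring with the quotient by the larger ideal kills all differentials, leaving $(R/(I+J))^{\oplus\binom{k}{i}}$. In the case $J\subseteq I$ you also note, as the paper does, that $I$ must be $\mathfrak{m}$--primary and hence $k=n$.
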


\begin{proof}
\ First, assume that $I\subseteq J$. Let $f_1,\ldots,f_k$ be an $R$--regular sequence generating $I,$ and let $K_{\bullet} (f_1,\ldots,f_k;R)$ be the Koszul complex with respect to this sequence of elements. Since $f_1,\ldots, f_k$ is an $R$--regular sequence, $K_{\bullet} (f_1,\ldots,f_k;R)$ is a free resolution of $R/I$ of length $k.$ Therefore, one has that
\[
\Tor_i^R (R/I,R/J)\cong H_i \left(K_{\bullet} (f_1,\ldots,f_k;R)\otimes_R R/J\right).
\]
However, since by assumption $I\subseteq J$ one has that all the differentials of $K_{\bullet} (f_1,\ldots,f_k;R)\otimes_R R/J$ are zero, and therefore one ends up, for each $i\geq 0$, with an $R/J$--isomorphism
\[
\Tor_i^R (R/I,R/J)\cong \left(R/J\right)^{\oplus\binom{k}{i}}.
\]
Since the length function is additive and $I\subseteq J,$ one obtains that
\[
\ell\left(\Tor_i^R (R/I,R/J)\right)=\binom{k}{i}\ell\left(R/J\right)=\binom{k}{i}\ell\left(R/(I+J)\right),
\]
as claimed.

On the other hand, assume that $J\subseteq I$. Since $J$ is generated by a regular sequence of length $n$, we have that $I$ is $\mathfrak{m}$--primary, hence $k=n$. Now, let $g_1,\ldots,g_n$ be an $R$--regular sequence generating $J$, and let $K_{\bullet} (g_1,\ldots,g_n;R)$ be the Koszul complex with respect to this sequence of elements, which is a free resolution of $R/J$ of length $n$. In this way, we have $\Tor_i^R (R/I,R/J)\cong H_i \left(K_{\bullet} (g_1,\ldots,g_n;R)\otimes_R R/I\right)$. Since by assumption $J\subseteq I$ one has that all the differentials of $K_{\bullet} (g_1,\ldots,g_n;R)\otimes_R R/I$ are zero, and therefore one ends up, for each $i\geq 0$, with an $R/I$--isomorphism
\[
\Tor_i^R (R/I,R/J)\cong \left(R/I\right)^{\oplus\binom{n}{i}}.
\]
Since the length function is additive and $J\subseteq I$, one finally obtains that
\[
\ell\left(\Tor_i^R (R/I,R/J)\right)=\binom{n}{i}\ell\left(R/I\right)=\binom{n}{i}\ell\left(R/(I+J)\right).
\]
This concludes the proof.
\end{proof}
Proposition \ref{the conjecture when one has some inclusion} can also be proved using the following result obtained in \cite[Lemma 2.2]{weakcompleteintersectionpaper}.

\begin{lm}\label{on weak complete intersections}
Let $(R,\mathfrak{m})$ be a commutative Noetherian local ring, let $I\subseteq R$ be an ideal, and let $\xymatrix@1{F_{\bullet}\ar[r]& M}$ be a minimal free resolution of a finitely generated $R$--module $M$. Moreover, set
\[
\beta_j (M):=\dim_{R/\mathfrak{m}} \left(\Tor_j^R (R/\mathfrak{m},M)\right),
\]
the $j$--th Betti number of $M$.
Then, the following statements are equivalent:

\begin{enumerate}[(i)]

\item\ Every differential $d_j$ of $F_{\bullet}$ satisfies $d_j (F_j)\subseteq I\cdot F_{j-1}.$

\item\ For any $j\geq 0,$ $\Tor_j^R (M,R/I)$ is a free $R/I$--module of rank $\beta_j (M).$

\end{enumerate}
\end{lm}

The motivation for considering Lemma \ref{on weak complete intersections} stems from the following notion, which was introduced in \cite[Definition 2.1]{weakcompleteintersectionpaper}, see also \cite[Definition 2.4]{Diethorn2020} for a closely related notion.

\begin{df}\label{weak complete intersection definition}
Let $R$ be a commutative Noetherian local ring, and let $I$ be an ideal of $R.$ We say that $I$ is a \textbf{weak complete intersection ideal} if, for any integer $j\geq 0,$ $\Tor_j^R (R/I,R/I)$ is a free $R/I$--module of rank $\beta_j (R/I)$.
\end{df}

Actually, thanks to Lemma \ref{on weak complete intersections}, we can extend Proposition \ref{the conjecture when one has some inclusion} in the following way:

\begin{prop}\label{the conjecture with some inclusion and weak complete intersection}
Let $(R,\mathfrak{m})$ be a commutative Noetherian local ring, let $I$ be an ideal of $R$ with finite projective dimension, and let $J\subseteq R$ be an ideal. Assume that $J$ is $\mathfrak{m}$--primary, that $I\subseteq J,$ and that $I$ is a weak complete intersection ideal. Then, one has that
\[
\ell (\Tor_j^R (R/I,R/J))=\beta_j (R/I)\ell\left(\frac{R}{J}\right).
\]
\end{prop}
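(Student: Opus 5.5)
The plan is to reduce everything to Lemma \ref{on weak complete intersections}, applied twice: first with the ideal $I$ itself, and then with the ideal $J$. Since $I$ has finite projective dimension, $M:=R/I$ has a finite minimal free resolution
\[
F_{\bullet}\colon\quad 0\to F_d\to\cdots\to F_1\to F_0\to R/I\to 0,
\]
with $F_j\cong R^{\beta_j(R/I)}$. The hypothesis that $I$ is a weak complete intersection ideal says exactly that condition (ii) of Lemma \ref{on weak complete intersections} holds for the pair $(M,I)=(R/I,I)$. Hence, by (ii)$\Rightarrow$(i), every differential of $F_{\bullet}$ satisfies $d_j(F_j)\subseteq I\cdot F_{j-1}$.

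Next we use the inclusion $I\subseteq J$. It gives $d_j(F_j)\subseteq I F_{j-1}\subseteq J F_{j-1}$ for every $j$, so condition (i) of Lemma \ref{on weak complete intersections} holds for the same minimal resolution $F_{\bullet}$, now with respect to the ideal $J$. Applying (i)$\Rightarrow$(ii), we get that for every $j\geq 0$, $\Tor_j^R(R/I,R/J)$ is a free $R/J$--module of rank $\beta_j(R/I)$. One could also see this directly: all differentials of $F_{\bullet}\otimes_R R/J$ vanish, so the homology is $F_j/JF_j\cong (R/J)^{\oplus\beta_j(R/I)}$.

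Finally, since $J$ is $\mathfrak{m}$--primary, $R/J$ has finite length. Additivity of length on the finite direct sum then gives
\[
\ell(\Tor_j^R(R/I,R/J))=\beta_j(R/I)\,\ell(R/J),
\]
which is the claimed equality.

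There is no serious obstacle. The only point needing care is that Lemma \ref{on weak complete intersections} is stated for one fixed ideal, so we must apply it to the same minimal resolution $F_{\bullet}$ for the two different ideals $I$ and $J$. The inclusion $I\subseteq J$ is precisely what allows condition (i) to pass from $I$ to $J$. Finite projective dimension is not strictly needed for the argument; it only ensures that the Betti numbers vanish for $j\gg 0$, matching the setting of Conjecture \ref{conjecture length Tor}. Specializing to $I$ a complete intersection, where $\beta_j(R/I)=\binom{k}{j}$ and $R/(I+J)=R/J$, recovers the first case of Proposition \ref{the conjecture when one has some inclusion}.
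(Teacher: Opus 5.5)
Your proposal is correct and is essentially the paper's proof: the weak complete intersection hypothesis, via (ii)$\Rightarrow$(i) of Lemma \ref{on weak complete intersections}, puts the differentials of the minimal resolution into $I\subseteq J$. Hence $F_{\bullet}\otimes_R R/J$ has zero differentials, $\Tor_j^R(R/I,R/J)\cong (R/J)^{\oplus\beta_j(R/I)}$, and additivity of length finishes; your second application of the lemma with $J$ is just a repackaging of this direct observation, and your remark that finite projective dimension is not actually needed is accurate.
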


\begin{proof}
\ Let $F_{\bullet}$ be the minimal free resolution of $R/I$ as $R$--module; since by assumption $I$ is a weak complete intersection, Lemma \ref{on weak complete intersections} guarantees that all the differentials of $F_{\bullet}$ can be represented by matrices whose entries are in $I.$ Since $I\subseteq J,$ one has that $F_{\bullet}\otimes_R R/J$ is a complex where all its differentials are zero, which implies that
\[
H_j (F_{\bullet}\otimes_R R/J)\cong \left(\frac{R}{J}\right)^{\oplus\beta_j (R/I).}
\]
This isomorphism implies, by the additivity of the length, that
\[
\ell (\Tor_j^R (R/I,R/J))=\ell (H_j (F_{\bullet}\otimes_R R/J))=\beta_j (R/I)\ell\left(\frac{R}{J}\right),
\]
just what we wanted to show.
\end{proof}

The following result is a direct consequence of \cite[Theorem 2.2]{MillerRahmatiStriuliduality}.

\begin{prop}\label{just a half of the Tors}
Preserving the notations of Conjecture \ref{conjecture length Tor}, for any integer $i\geq 0,$ there is an $R/J$--isomorphism
\[
\Hom_{R/J}(\Tor_i^R (R/I,R/J),R/J)\cong\Tor_{k-i}^R (R/I,R/J).
\]
In particular, we get the equality $\ell(\Tor_i^R (R/I,R/J))=\ell(\Tor_{k-i}^R (R/I,R/J)).$

\end{prop}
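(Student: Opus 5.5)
The plan is to realize $\Tor_\bullet^R(R/I,R/J)$ as the homology of a Koszul complex over the Gorenstein Artinian ring $A:=R/J$, and then to apply the self-duality of that Koszul complex together with the exactness of Matlis duality over $A$. Since $J$ is generated by a regular sequence of length $n=\dim R$, the quotient $A=R/J$ is an Artinian complete intersection, hence Gorenstein. In particular $A$ is injective as an $A$--module and is the injective hull of its residue field. Consequently the functor $(-)^\vee:=\Hom_A(-,A)$ is exact on finitely generated $A$--modules, and it preserves length.

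Let $f_1,\ldots,f_k$ be the regular sequence generating $I$. The Koszul complex $K_\bullet(f_1,\ldots,f_k;R)$ is a free resolution of $R/I$, so
\[
\Tor_i^R(R/I,R/J)\cong H_i(K_\bullet(\bar f;A)),
\]
where $\bar f$ denotes the images of the $f_j$ in $A$. The Koszul complex is self-dual: there is an isomorphism of complexes
\[
\Hom_A(K_\bullet(\bar f;A),A)\cong K_{k-\bullet}(\bar f;A),
\]
induced by the perfect pairing $\bigwedge^i A^k\times\bigwedge^{k-i}A^k\to\bigwedge^k A^k\cong A$. The only care needed here is with signs on the differentials, which do not affect homology.

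Since $\Hom_A(-,A)$ is exact, it commutes with taking homology. This gives
\[
H_i(K_\bullet(\bar f;A))^\vee\cong H^{i}\bigl(\Hom_A(K_\bullet,A)\bigr)\cong H_{k-i}(K_\bullet(\bar f;A)),
\]
which is exactly the claimed $R/J$--isomorphism between $\Hom_{R/J}(\Tor_i^R(R/I,R/J),R/J)$ and $\Tor_{k-i}^R(R/I,R/J)$. For the length statement, Matlis duality over the Artinian Gorenstein ring $A$ preserves length, because $\Hom_A(-,E_A(k))$ does and $A\cong E_A(k)$. Therefore $\ell(\Tor_i)=\ell(\Tor_i^\vee)=\ell(\Tor_{k-i})$.

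The main point to verify carefully is the identification of the dualized Koszul complex with the reindexed Koszul complex, that is, that the degree shift is by exactly $k$ and that the differentials match up to sign. This is standard (for instance, Bruns--Herzog, Proposition 1.6.10). Alternatively, one could cite \cite[Theorem 2.2]{MillerRahmatiStriuliduality} directly, as the paper indicates. Note that the hypothesis that $I+J$ is $\mathfrak m$--primary is not needed for the isomorphism itself. It only guarantees finite length of the modules involved, and even that is automatic here, since they are finitely generated $A$--modules.
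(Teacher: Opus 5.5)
Your proof is correct, but it is not the route the paper takes. The paper gives no argument: it simply calls the statement a direct consequence of \cite[Theorem 2.2]{MillerRahmatiStriuliduality}, a duality theorem for Koszul homology over Gorenstein rings, applied to the Artinian complete intersection $A=R/J$.

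You instead prove the needed special case from scratch, in three steps:
\begin{enumerate}
\item identify $\Tor_i^R(R/I,R/J)$ with $H_i(\bar f;A)$;
\item use the self-duality $\Hom_A(K_\bullet(\bar f;A),A)\cong K_{k-\bullet}(\bar f;A)$ of the Koszul complex;
\item use that $A$ is self-injective, so $\Hom_A(-,A)$ commutes with homology, and that $A\cong E_A$ of its residue field, so the dual preserves length.
\end{enumerate}

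The citation buys brevity, and it places the result inside a general duality theory. Beyond the Artinian case that theory needs more than your argument, since $\Hom_A(-,A)$ is then no longer exact.

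Your route buys transparency. It shows that the only inputs are that $R/J$ is Artinian Gorenstein and that the Koszul complex on the generators of $I$ resolves $R/I$. In fact it gives $\Hom_A(H_i(\mathbf{a};A),A)\cong H_{m-i}(\mathbf{a};A)$ for any $m$ elements $\mathbf{a}$ of any Artinian Gorenstein local ring $A$. It also confirms your remark that the $\mathfrak{m}$--primary hypothesis is superfluous here. The same elementary argument covers the later places in Section 1 where the paper again invokes \cite{MillerRahmatiStriuliduality} over an Artinian complete intersection.

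A cosmetic point: you use $k$ both for the length of the regular sequence and for the residue field in $E_A(k)$, so rename one of them.
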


As an immediate consequence of Proposition \ref{just a half of the Tors} one obtains:

\begin{cor}\label{only half of the lengths needed}
Conjecture \ref{conjecture length Tor} holds if and only if, for any $1\leq i\leq\lfloor k/2\rfloor$, we have
\[
\ell\left(\Tor_i^R (R/I,R/J)\right)=\binom{k}{i}\ell\left(\frac{R}{I+J}\right).
\]
\end{cor}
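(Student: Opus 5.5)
The plan is to deduce the corollary directly from Proposition \ref{just a half of the Tors}, together with the symmetry of binomial coefficients and the trivial case $i=0$. The ``only if'' direction is immediate, since Conjecture \ref{conjecture length Tor} asserts the equality for every $0\leq i\leq k$, in particular for $1\leq i\leq\lfloor k/2\rfloor$. So all the work is in the ``if'' direction.

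For the converse, assume the equality holds for $1\leq i\leq\lfloor k/2\rfloor$. First dispose of $i=0$: we have $\Tor_0^R(R/I,R/J)\cong R/I\otimes_R R/J\cong R/(I+J)$, so the required equality holds with $\binom{k}{0}=1$. Next take $\lfloor k/2\rfloor< i\leq k$, so that $0\leq k-i\leq\lceil k/2\rceil-1\leq\lfloor k/2\rfloor$. By Proposition \ref{just a half of the Tors},
\[
\ell\left(\Tor_i^R(R/I,R/J)\right)=\ell\left(\Tor_{k-i}^R(R/I,R/J)\right).
\]
Since $k-i$ lies in the range already covered (either $k-i=0$, handled above, or $1\leq k-i\leq\lfloor k/2\rfloor$, covered by hypothesis), the right-hand side equals $\binom{k}{k-i}\ell(R/(I+J))=\binom{k}{i}\ell(R/(I+J))$. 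This gives the equality for every $0\leq i\leq k$.

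There is no real obstacle. The only things to check are the index bookkeeping, namely that $i\mapsto k-i$ sends $\{\lfloor k/2\rfloor+1,\ldots,k\}$ into $\{0,\ldots,\lfloor k/2\rfloor\}$, and that the length equality in Proposition \ref{just a half of the Tors} is justified. For the latter, $\Tor_i^R(R/I,R/J)$ has finite length because it is annihilated by the $\mathfrak{m}$-primary ideal $I+J$. Moreover, $R/J$ is an Artinian Gorenstein ring, since it is a regular local ring modulo a maximal regular sequence, so Matlis duality over $R/J$ preserves length. Hence $\Hom_{R/J}(-,R/J)$ preserves length, and we may take the proposition as given.
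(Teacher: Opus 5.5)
Your proposal is correct and is essentially the paper's argument. The paper gives no written proof and simply calls the corollary an immediate consequence of Proposition \ref{just a half of the Tors}; your handling of the $i=0$ case via $\Tor_0^R(R/I,R/J)\cong R/(I+J)$, the reflection $i\mapsto k-i$ onto $\{0,\ldots,\lfloor k/2\rfloor\}$, and the symmetry $\binom{k}{i}=\binom{k}{k-i}$ supply exactly the bookkeeping it leaves implicit (including the vacuous-hypothesis case $k=1$).
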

In particular, Corollary \ref{only half of the lengths needed} leads to a different proof of Conjecture \ref{conjecture length Tor} for the case $k=2$ with respect to the one given in \cite{conjectureTor}.

\begin{disc}\label{alternative proof of the conjecture for k equal two}
Assume that $I=(f_1,f_2),$ where $f_1,f_2$ is an $R$--regular sequence. Set $A:=R/J;$ in this particular case, the complex $K_{\bullet}(f_1,f_2;R)\otimes_R A$ is given by
\[
\xymatrix{0\ar[r]& A\ar[r]^-{d_2}& A^2\ar[r]^-{d_1}& A\ar[r]& 0.}
\]
This complex can be split into the following short exact sequences:
\begin{align*}
& \xymatrix{0\ar[r]& \ker(d_2)=\Tor_2^R (R/I,R/J)\ar[r]& A\ar[r]& \im(d_2)\ar[r]& 0}\\
& \xymatrix{0\ar[r]& \im(d_2)\ar[r]& \ker (d_1)\ar[r]& \Tor_1^R (R/I,R/J)\ar[r]& 0}\\
& \xymatrix{0\ar[r]& \ker(d_1)\ar[r]& A^2\ar[r]& \frac{I+J}{J}\ar[r]& 0}\\
& \xymatrix{0\ar[r]& \frac{I+J}{J}\ar[r]& A\ar[r]& \frac{R}{I+J}\ar[r]& 0.}
\end{align*}
We know, thanks to Corollary \ref{only half of the lengths needed}, that
\[
\ell(\Tor_2^R (R/I,R/J))=\ell\left(\frac{R}{I+J}\right).
\]
Keeping in mind this fact, and using the additivity of the length with short exact sequences one has that
\begin{align*}
& \ell(\Tor_1^R (R/I,R/J))=\ell(\ker(d_1))-\ell(\im(d_2))=2\ell(A)-\ell\left(\frac{I+J}{J}\right)-\ell(\im(d_2))\\
& =2\ell(A)+\ell\left(\frac{R}{I+J}\right)-\ell(A)-\ell(\im(d_2))
=\ell(A)+\ell\left(\frac{R}{I+J}\right)-\ell(\im(d_2))\\
& =\ell\left(\frac{R}{I+J}\right)+\ell(A)-\ell(\im(d_2))=\ell\left(\frac{R}{I+J}\right)+\ell(\Tor_2^R (R/I,R/J))=2\ell\left(\frac{R}{I+J}\right),
\end{align*}
just what we wanted to show.
\end{disc}
Another particular instance where Conjecture \ref{conjecture length Tor} holds is given in the following statement. We present two proofs of this result for the reader's benefit. In the next result, given a sequence of elements $\mathbf{g}=g_1,\ldots,g_t$ in a commutative ring $A$, $H_i (\mathbf{g};A)$ will denote the $i$th homology group of the Koszul complex of $A$ with respect to the sequence $\mathbf{g}$.

\begin{prop}\label{the conjecture in good position}
Suppose that $I$ can be generated by a regular sequence of length $k$, and let $J=(f_1,\ldots,f_n)$, where $\mathbf{f}=f_1,\ldots,f_n$ is an $R$--regular sequence. Assume that $(f_1,\ldots,f_k)\subseteq I$ and that $f_{k+1},\ldots,f_n$ is an $R/I$--regular sequence. Then, Conjecture \ref{conjecture length Tor} holds.
\end{prop}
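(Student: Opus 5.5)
Our plan is to compute $\Tor_i^R(R/I,R/J)$ through the Koszul complex on $\mathbf{f}$, splitting it into the part coming from $f_1,\ldots,f_k$, which lies in $I$, and the part coming from $f_{k+1},\ldots,f_n$, which is regular on $R/I$. Since $\mathbf{f}$ is $R$--regular, $K_\bullet(\mathbf{f};R)$ resolves $R/J$. Hence
\[
\Tor_i^R(R/I,R/J)\cong H_i(\mathbf{f};R/I).
\]
Write $\mathbf{f}'=f_1,\ldots,f_k$ and $\mathbf{f}''=f_{k+1},\ldots,f_n$. Then $K_\bullet(\mathbf{f};R/I)\cong K_\bullet(\mathbf{f}';R/I)\otimes_{R/I}K_\bullet(\mathbf{f}'';R/I)$. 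Because $\mathbf{f}'\subseteq I$, the complex $K_\bullet(\mathbf{f}';R/I)$ has zero differentials, so it is $\bigoplus_j (R/I)^{\binom{k}{j}}[j]$ as a complex.

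It follows that
\[
H_i(\mathbf{f};R/I)\cong\bigoplus_{j}\left(H_{i-j}(\mathbf{f}'';R/I)\right)^{\oplus\binom{k}{j}}.
\]
Since $\mathbf{f}''$ is $R/I$--regular, $H_m(\mathbf{f}'';R/I)=0$ for $m>0$, and $H_0(\mathbf{f}'';R/I)=R/(I+(\mathbf{f}''))$. Only the term $j=i$ survives, giving
\[
\Tor_i^R(R/I,R/J)\cong\left(R/(I+(f_{k+1},\ldots,f_n))\right)^{\oplus\binom{k}{i}}.
\]

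Finally, $(f_1,\ldots,f_k)\subseteq I$ gives $I+(f_{k+1},\ldots,f_n)=I+J$. The isomorphism is therefore $\Tor_i^R(R/I,R/J)\cong (R/(I+J))^{\oplus\binom{k}{i}}$, and additivity of length yields the claim; for $i>k$ both sides vanish.

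The only point requiring care is the Künneth-type splitting. We will handle it directly: a tensor product of complexes of free modules, one of which has zero differentials, is a direct sum of shifted copies of the other, so no spectral sequence is needed. For a second proof we could instead induct on $n-k$, using the long exact sequence $H_i(\mathbf{g};M)\xrightarrow{\pm f}H_i(\mathbf{g};M)\to H_i(\mathbf{g},f;M)\to H_{i-1}(\mathbf{g};M)\to$. Adding a regular element $f_{k+1}$ to $R/I$ reduces to $R/(I+(f_{k+1}))$, which is still a quotient by a regular sequence with $\mathbf{f}'$ contained in it. This peels off the $R/I$--regular elements one at a time and reduces to Proposition~\ref{the conjecture when one has some inclusion}.
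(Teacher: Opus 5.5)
Your argument is correct and is essentially the paper's first proof: splitting off $K_\bullet(f_1,\ldots,f_k;R/I)$, whose differentials vanish, is exactly what the paper does by rewriting $\mathbf{f}$ over $R/I$ as $f_{k+1},\ldots,f_n,0,\ldots,0$ and applying $H_i(\mathbf{b},0;B)\cong H_i(\mathbf{b};B)\oplus H_{i-1}(\mathbf{b};B)$ $k$ times. Your sketched second route, which peels off the $R/I$--regular elements to reach $H_i(f_1,\ldots,f_k;R/(I+J))$ with zero differentials, matches the paper's alternative proof; that proof relies on the isomorphism $H_i(\mathbf{g},f;M)\cong H_i(\mathbf{g};M/fM)$ for $M$--regular $f$ rather than on the long exact sequence alone.
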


\begin{proof}
\ Recall that, given $B$ an arbitrary commutative Noetherian ring, and given $\mathbf{b}=b_1,\ldots, b_t$ a sequence of elements of $B,$ by \cite[Remark 1.4]{Hunekekoszulhomology} one has, for any integer $i\geq 0,$ that
\begin{equation}\label{huneke trick}
H_i (\mathbf{b},0;B)\cong H_i (\mathbf{b};B)\oplus H_{i-1} (\mathbf{b};B).
\end{equation}
Now, we come back to our setting; our assumption $(f_1,\ldots,f_k)\subseteq I$ implies that
\[
H_i (\mathbf{f};R/I)\cong H_i (f_{k+1},\ldots,f_n,0,\ldots, 0;R/I).
\]
Therefore, using \eqref{huneke trick} $k$ times one ends up with the following isomorphism:
\[
H_i (\mathbf{f};R/I)\cong\bigoplus_{j=0}^i H_{i-j}(f_{k+1},\ldots,f_n;R/I)^{\oplus\binom{k}{j}}.
\]
However, since by assumption $f_{k+1},\ldots,f_n$ is an $R/I$--regular sequence, the last isomorphism boils down to the below one:
\[
H_i (\mathbf{f};R/I)\cong H_0 (f_{k+1},\ldots,f_n;R/I)^{\oplus\binom{k}{i}}\cong\left(\frac{R}{I+J}\right)^{\oplus\binom{k}{i}}.
\]
Summing up, since $\Tor_i^R (R/I,R/J)\cong H_i (\mathbf{f};R/I)$, one concludes, combining these last two isomorphisms, that
\[
\ell(\Tor_i^R (R/I,R/J))=\binom{k}{i}\ell\left(\frac{R}{I+J}\right),
\]
as claimed.
\end{proof}

\begin{proof}[Alternative proof of Proposition \ref{the conjecture in good position}]
\ Since, by assumption, $f_{k+1},\ldots, f_n$ is an $R/I$--regular sequence and that
\[
\frac{R}{I+(f_{k+1},\ldots,f_n)}=\frac{R}{I+J},
\]
one has, by \cite[Corollary 5.16]{Vasconcelosbookintegralclosure}, that
\[
H_i (\mathbf{f};R/I)\cong H_i (f_1,\ldots,f_k; R/(I+J)).
\]
Finally, since $(f_1,\ldots, f_k)\subseteq I+J$ one has that all the differentials of $K_{\bullet} (f_1,\ldots,f_k; R/(I+J))$ are zero and therefore one finally ends up with an isomorphism of $R/(I+J)$--modules
\[
H_i (f_1,\ldots,f_k; R/(I+J))\cong\left(\frac{R}{I+J}\right)^{\oplus\binom{k}{i}}.
\]
Summing up, we obtain
\[
\ell(\Tor_i^R (R/I,R/J))=\ell(H_i (\mathbf{f};R/I))=\ell(H_i (f_1,\ldots,f_k; R/(I+J)))=\binom{k}{i}\ell\left(\frac{R}{I+J}\right),
\]
as claimed.
\end{proof}

We can extend the conclusion obtained in Proposition \ref{the conjecture in good position} in the following way.

\begin{prop}\label{the conjecture in quasi good position}
Let $k\geq 3$, suppose that $I$ can be generated by a regular sequence of length $k$, and let $J=(f_1,\ldots,f_n),$ where $\mathbf{f}=f_1,\ldots,f_n$ is an $R$--regular sequence. Assume that $(f_1,\ldots,f_{k-1})\subseteq I,$ and that $f_{k+1},\ldots,f_n$ is an $R/I$--regular sequence. Then, Conjecture \ref{conjecture length Tor} holds.
\end{prop}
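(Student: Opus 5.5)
The plan is to reduce to a Koszul homology computation over an Artinian ring, as in the alternative proof of Proposition \ref{the conjecture in good position}, and then to handle the single "bad" element $f_k$ by hand. Since $f_1,\ldots,f_n$ is an $R$--regular sequence, $K_{\bullet}(\mathbf{f};R)$ resolves $R/J$. Hence $\Tor_i^R(R/I,R/J)\cong H_i(\mathbf{f};R/I)$.

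First I remove $f_{k+1},\ldots,f_n$. Set
\[
B:=\frac{R}{I+(f_{k+1},\ldots,f_n)}.
\]
Since $f_{k+1},\ldots,f_n$ is $R/I$--regular, \cite[Corollary 5.16]{Vasconcelosbookintegralclosure}, used exactly as in the alternative proof of Proposition \ref{the conjecture in good position}, gives
\[
H_i(\mathbf{f};R/I)\cong H_i(f_1,\ldots,f_k;B).
\]
Because $(f_1,\ldots,f_{k-1})\subseteq I$, the images of $f_1,\ldots,f_{k-1}$ in $B$ are zero. So this is $H_i(0,\ldots,0,f_k;B)$, with $k-1$ zeros. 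Applying \eqref{huneke trick} $k-1$ times yields
\[
H_i(\mathbf{f};R/I)\cong H_i(f_k;B)^{\oplus\binom{k-1}{i}}\oplus H_{i-1}(f_k;B)^{\oplus\binom{k-1}{i-1}}.
\]
Here only $H_0$ and $H_1$ of the one-element Koszul complex $0\to B\xrightarrow{f_k}B\to 0$ occur. They are $H_0(f_k;B)=B/f_kB=R/(I+J)$ and $H_1(f_k;B)=(0:_B f_k)$.

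The key step, which I expect to be the only real point, is to show that $B$ has finite length. Then the exact sequence
\[
0\to(0:_Bf_k)\to B\xrightarrow{f_k}B\to B/f_kB\to 0
\]
gives $\ell(0:_Bf_k)=\ell(B/f_kB)=\ell(R/(I+J))$. Finiteness of length holds because $R/I$ is Cohen--Macaulay of dimension $n-k$ (as $R$ is regular and $I$ is a complete intersection). Killing the $R/I$--regular sequence $f_{k+1},\ldots,f_n$ of length $n-k$ therefore drops the dimension to $0$. Since $B$ is local, $B$ is Artinian.

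Combining these facts gives
\[
\ell(\Tor_i^R(R/I,R/J))=\left(\binom{k-1}{i}+\binom{k-1}{i-1}\right)\ell\left(\frac{R}{I+J}\right)=\binom{k}{i}\ell\left(\frac{R}{I+J}\right)
\]
by Pascal's rule, which is the claim. I do not expect to need the hypotheses $k\geq 3$ or $n\geq 4$ in any essential way; they only separate this statement from the cases already covered by Discussion \ref{alternative proof of the conjecture for k equal two} and Proposition \ref{the conjecture in good position}. One should also check that the isomorphism from \cite[Corollary 5.16]{Vasconcelosbookintegralclosure} is applied with the ordering convention used there, where the regular part of the sequence is quotiented out. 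This is harmless because Koszul homology does not depend on the order of the sequence.
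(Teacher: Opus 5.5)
Your proof is correct and follows the paper's skeleton. The paper also passes to $A=R/(g_1,\ldots,g_k,f_{k+1},\ldots,f_n)$ (your $B$) via \cite[Corollary 5.16]{Vasconcelosbookintegralclosure}, applies \eqref{huneke trick} $k-1$ times, and concludes with Pascal's rule.

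You differ only in the one substantive step, $\ell(H_1(f_k;B))=\ell(H_0(f_k;B))$. The paper notes that $A$ is a complete intersection, hence Gorenstein, and gets the equality from the duality of \cite[Theorem 2.2]{MillerRahmatiStriuliduality}, plus the fact that dualizing a finite length module preserves length. You use only that $B$ is Artinian and the four-term exact sequence of multiplication by $f_k$; this is the argument the paper itself uses for $k=1$ in Section \ref{section: singularity theory}. Your version is more elementary, makes explicit the finiteness of $\ell(B)$ that the paper leaves implicit, and confirms that $k\geq 3$ is not used.

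The paper's duality route is chosen because it carries over to Proposition \ref{the conjecture in 2-quasi good position}. There one needs $\ell(0:_A(f_{k-1},f_k))=\ell(A/(f_{k-1},f_k)A)$. For two elements this no longer follows formally from finite length: it fails, for example, for $x,y$ in $\mathbb{K}[x,y]/(x,y)^2$, where the annihilator of $(x,y)$ has length $2$ but the quotient has length $1$. So that case genuinely uses that $A$ is Gorenstein.

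One slip to correct: in your displayed decomposition the summands should read $H_0(f_k;B)^{\oplus\binom{k-1}{i}}\oplus H_1(f_k;B)^{\oplus\binom{k-1}{i-1}}$, not $H_i(f_k;B)$ and $H_{i-1}(f_k;B)$. Taken literally, your display gives the wrong count for $i\geq 2$. The sentence after it and your final computation do use the correct form.
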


\begin{proof}
\ Let $g_1,\ldots,g_k$ be a regular sequence generating $I$. Moreover, set
\[
A:=\frac{R}{(g_1,\ldots,g_k,f_{k+1},\ldots,f_n)},
\]
which by our assumption is a complete intersection ring, in particular Gorenstein. Fix an integer $j\geq 1$, and denote by $\mathbf{0}_{k-1}$ the sequence of $(k-1)$ consecutive zeroes. Now, using \cite[Corollary 5.16]{Vasconcelosbookintegralclosure} one has that
\[
H_j (K_{\bullet}(f_1,\ldots,f_n;R/I))=H_j (K_{\bullet}(f_k,\mathbf{0}_{k-1};A)).
\]
Now, using \eqref{huneke trick} $k-1$ times one ends up with the following isomorphism:
\begin{equation}\label{huneke trick again}
H_j (K_{\bullet}(f_1,\ldots,f_n;R/I))=\bigoplus_{s=0}^j H_{j-s} (K_{\bullet}(f_k;A))^{\oplus\binom{k-1}{s}}.
\end{equation}
However, since the homology of the Koszul complex with respect to one element is concentrated always in degrees zero and one, the natural isomorphism \eqref{huneke trick again} boils down to
\[
H_j (K_{\bullet}(f_1,\ldots,f_n;R/I))=H_1 (K_{\bullet}(f_k;A))^{\oplus\binom{k-1}{j-1}}\oplus H_0 (K_{\bullet}(f_k;A))^{\oplus\binom{k-1}{j}}.
\]
Moreover, since $A$ is Gorenstein we have, using \cite[Theorem 2.2]{MillerRahmatiStriuliduality}, that
\[
\Hom_A (\Hom_A (H_1 (K_{\bullet}(f_k;A),A),A)\cong\Hom_A (H_0 (K_{\bullet}(f_k;A),A)
\]
and therefore, since the length of the linear dual of a module of finite length agree with the length of the module itself \cite[page 350]{dualityfinitelength}, we get that
\[
\ell\left(H_1 (K_{\bullet}(f_k;A))\right)=\ell\left(H_0 (K_{\bullet}(f_k;A))\right).
\]
In this way, combining these two last equalities with the additivity of the length and the Pascal identity
\[
\binom{k}{j}=\binom{k-1}{j-1}+\binom{k-1}{j}
\]
we obtain, taking into account
\[
\ell\left(H_0 (K_{\bullet}(f_k;A))\right)=\ell\left(\frac{R}{I+J}\right),
\]
that
\[
\ell(\Tor_j^R (R/I,R/J))=\binom{k}{j}\ell\left(\frac{R}{I+J}\right).
\]
This concludes the proof.
\end{proof}

The same strategy used along Proposition \ref{the conjecture in quasi good position} can be used to prove the following generalization of it.

\begin{prop}\label{the conjecture in 2-quasi good position}
Let $k\geq 3$, suppose that $I$ can be generated by a regular sequence of length $k$, and let $J=(f_1,\ldots,f_n),$ where $\mathbf{f}=f_1,\ldots,f_n$ is an $R$--regular sequence. Assume that $(f_1,\ldots,f_{k-2})\subseteq I,$ and that $f_{k+1},\ldots,f_n$ is an $R/I$--regular sequence. Then, Conjecture \ref{conjecture length Tor} holds.
\end{prop}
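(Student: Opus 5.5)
The plan is to follow the proof of Proposition \ref{the conjecture in quasi good position}, but now with two ``free'' elements $f_{k-1},f_k$ left over instead of one. Let $g_1,\ldots,g_k$ be a regular sequence generating $I$, and set
\[
A:=\frac{R}{(g_1,\ldots,g_k,f_{k+1},\ldots,f_n)}.
\]
Since $R/I$ is Cohen--Macaulay of dimension $n-k$ and $f_{k+1},\ldots,f_n$ is $R/I$--regular, $A$ is a complete intersection of dimension $0$. Hence $A$ is an Artinian Gorenstein local ring.

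\emph{Reduction to two elements.} As in the proof of Proposition \ref{the conjecture in quasi good position}, I will use \cite[Corollary 5.16]{Vasconcelosbookintegralclosure} to pass modulo the $R/I$--regular sequence $f_{k+1},\ldots,f_n$. I will also use $(f_1,\ldots,f_{k-2})\subseteq I$, which makes those elements act as zero on $A$. Together these give
\[
\Tor_j^R(R/I,R/J)\cong H_j(\mathbf{f};R/I)\cong H_j(f_{k-1},f_k,\mathbf{0}_{k-2};A).
\]
Applying \eqref{huneke trick} $k-2$ times then yields
\[
\Tor_j^R(R/I,R/J)\cong\bigoplus_{s=0}^{j} H_{j-s}(f_{k-1},f_k;A)^{\oplus\binom{k-2}{s}}.
\]
Only $H_0$, $H_1$ and $H_2$ of the two-element Koszul complex can be nonzero.

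\emph{Lengths of the Koszul homology of two elements.} Put $L:=\ell(R/(I+J))$. The three lengths should come out as follows.
\begin{enumerate}[(a)]
\item $H_0(f_{k-1},f_k;A)=A/(f_{k-1},f_k)A\cong R/(I+J)$, so its length is $L$.
\item $H_2(f_{k-1},f_k;A)=0:_A(f_{k-1},f_k)\cong\Hom_A(A/(f_{k-1},f_k)A,A)$. Since $A$ is Artinian Gorenstein, $A$ is its own injective hull of the residue field, and Matlis duality preserves length. Alternatively one can quote \cite[Theorem 2.2]{MillerRahmatiStriuliduality} as in the previous proof. Either way, $\ell(H_2)=L$.
\item For $H_1$, I use that $A$ has finite length, so the Euler characteristic of $0\to A\to A^2\to A\to 0$ in lengths is $\ell(A)-2\ell(A)+\ell(A)=0$. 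Hence $\ell(H_1)=\ell(H_0)+\ell(H_2)=2L$.
\end{enumerate}

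\emph{Combining.} By additivity of length,
\[
\ell(\Tor_j^R(R/I,R/J))=L\left[\binom{k-2}{j}+2\binom{k-2}{j-1}+\binom{k-2}{j-2}\right].
\]
By Vandermonde's identity, or by applying Pascal's identity twice, the bracket equals $\binom{k}{j}$, which is the claim.

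The main obstacle is only the justification in the first step. One must check that \cite[Corollary 5.16]{Vasconcelosbookintegralclosure} applies with the regular elements placed at the end of the sequence, which holds since Koszul homology is independent of the order of the sequence. One must also check that $A$ is genuinely Artinian, so that the Euler characteristic argument in (c) is legitimate. The second point follows from the dimension count above.
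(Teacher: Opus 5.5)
Your proposal is correct and follows essentially the same route as the paper. Both arguments pass to Koszul homology of $f_{k-1},f_k,\mathbf{0}_{k-2}$ over the Artinian Gorenstein ring $A$ via \cite[Corollary 5.16]{Vasconcelosbookintegralclosure}, split off the zeros with \eqref{huneke trick}, obtain $\ell(H_2)=\ell(H_0)$ by duality and $\ell(H_1)=2\ell(H_0)$, and finish with the binomial identity. Your Euler-characteristic step for $H_1$ is just a compact form of the short-exact-sequence computation in Discussion \ref{alternative proof of the conjecture for k equal two}, which is the calculation the paper invokes at that point.
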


\begin{proof}
\ As before, let $g_1,\ldots,g_k$ be a regular sequence generating $I$, and set
\[
A:=\frac{R}{(g_1,\ldots,g_k,f_{k+1},\ldots,f_n)}.
\]
Fix $j\geq 1$ be an integer, and denote by $\mathbf{0}_{k-2}$ the sequence of $(k-2)$ consecutive zeroes. Now, using \cite[Corollary 5.16]{Vasconcelosbookintegralclosure} one has that
\[
H_j (K_{\bullet}(f_1,\ldots,f_n;R/I))=H_j (K_{\bullet}(f_{k-1},f_k,\mathbf{0}_{k-2};A)).
\]
Now, using \eqref{huneke trick} $k-2$ times one ends up with the following isomorphism:
\begin{equation}\label{huneke trick twice}
H_j (K_{\bullet}(f_1,\ldots,f_n;R/I))=\bigoplus_{s=0}^j H_{j-s} (K_{\bullet}(f_{k-1},f_k;A))^{\oplus\binom{k-2}{s}}.
\end{equation}
However, since the homology of the Koszul complex with respect to two elements is concentrated always in degrees zero, one and two, the natural isomorphism \eqref{huneke trick twice} boils down to
\begin{align*}
H_j (K_{\bullet}(f_1,\ldots,f_n;R/I))=& H_2 (K_{\bullet}(f_{k-1},f_k;A))^{\oplus\binom{k-2}{j-2}}\oplus H_1 (K_{\bullet}(f_{k-1},f_k;A))^{\oplus\binom{k-2}{j-1}}\\
& \oplus H_0 (K_{\bullet}(f_{k-1},f_k;A))^{\oplus\binom{k-2}{j}}.
\end{align*}
Moreover, since $A$ is Gorenstein we have, using \cite[Theorem 2.2]{MillerRahmatiStriuliduality}, that
\[
\Hom_A (\Hom_A (H_2 (K_{\bullet}(f_{k-1},f_k;A),A),A)\cong\Hom_A (H_0 (K_{\bullet}(f_{k-1},f_k;A),A),
\]
and therefore, using again that the length of the linear dual of a module of finite length agree with the length of the module itself, we have that
\[
\ell\left(H_2 (K_{\bullet}(f_{k-1},f_k;A))\right)=\ell\left(H_0 (K_{\bullet}(f_{k-1},f_k;A))\right).
\]
On the other hand, using again that $A$ is Gorenstein, we can repeat the calculation done for proving Conjecture \ref{conjecture length Tor} in the case of two elements to obtain the following equality:
\[
\ell\left(H_1 (K_{\bullet}(f_{k-1},f_k;A))\right)=2\ell\left(H_0 (K_{\bullet}(f_{k-1},f_k;A))\right)
\]
In this way, combining all these equalities with the additivity of the length and the identity
\begin{align*}
& \binom{k-2}{j-2}+2\binom{k-2}{j-1}+\binom{k-2}{j}=\binom{k-2}{j-2}+\binom{k-2}{j-1}+\binom{k-2}{j-1}+\binom{k-2}{j}\\
& =\binom{k-1}{j-1}+\binom{k-1}{j}=\binom{k}{j}
\end{align*}
we end up with
\[
\ell(\Tor_j^R (R/I,R/J))=\binom{k}{j}\ell\left(\frac{R}{I+J}\right),
\]
once again keeping in mind that
\[
\ell\left(H_0 (K_{\bullet}(f_{k-1},f_k;A))\right)=\ell\left(\frac{R}{I+J}\right).
\]
The proof is therefore completed.
\end{proof}
Our next goal is to provide a concrete example where one can effectively apply Proposition \ref{the conjecture in 2-quasi good position}. We will be back to the monomial case in Section \ref{the monomial case section}.

\begin{ex}\label{the monomial case with two freedom degrees}
Let $n\geq 4$ be an integer, let $3\leq k\leq n-1$ be an integer, let $(a_1,\ldots,a_k)\in\mathbb{N}^k,$ let $(b_1,\ldots, b_n)\in\mathbb{N}^n$ such that $a_j\leq b_j$ for any $1\leq j\leq k-2,$ let $S=\mathbb{K}[x_1,\ldots,x_n]$ be the polynomial ring in $n$ variables over a field $\mathbb{K},$ let $\mathfrak{m}=(x_1,\ldots,x_n),$ and set
\[
R:=S_{\mathfrak{m}},\ I:=(x_1^{a_1},\ldots,x_k^{a_k})R,\ J:=(x_1^{b_1},\ldots,x_n^{b_n})R.
\]
Notice that the following statements hold.

\begin{enumerate}[(i)]

\item\ $I$ is generated by an $R$--regular sequence of length $k.$

\item\ $J$ is generated by an $R$--regular sequence of length $n.$

\item\ Since, for any $1\leq j\leq k-2,$ $a_j\leq b_j,$ one has that $(x_1^{b_1},\ldots,x_{k-2}^{b_{k-2}})R\subseteq I.$

\item\ $x_{k+1}^{b_{k+1}},\ldots,x_n^{b_n}$ is an $R/I$--regular sequence of length $n-k\geq 1.$

\end{enumerate}
Therefore, by the foregoing statements we can apply Proposition \ref{the conjecture in 2-quasi good position} to conclude that, in this case,
\[
\ell(\Tor_i^R (R/I,R/J))=\binom{k}{i}\ell\left(\frac{R}{I+J}\right).
\]
\end{ex}
The same strategy used in all our previous results can also be applied to prove the following:

\begin{lm}\label{the most we can say}
Let $k\geq 3$, suppose that $I$ can be generated by a regular sequence of length $k$, and let $J=(f_1,\ldots,f_n),$ where $\mathbf{f}=f_1,\ldots,f_n$ is an $R$--regular sequence. Assume that $(f_1,\ldots,f_s)\subseteq I$ for some $1\leq s\leq k-2$ and that $f_{k+1},\ldots,f_n$ is an $R/I$--regular sequence. Then, one has that
\[
\ell\left(H_j (\mathbf{f};R/I)\right)=\sum_{i=0}^{k-s}\binom{s}{j-i}\ell\left(H_i (f_{s+1},\ldots ,f_k;R/(I+(f_{k+1},\ldots,f_n))\right).
\]
\end{lm}

\begin{rk}
Notice that Lemma \ref{the most we can say} is a key technical fact for reducing the conjecture to the case where our ideal $J$ is generated by less elements. The price we have to pay is that we have to calculate Koszul homologies not over a regular local ring, but over the complete intersection ring $R/I$. Moreover, in this case $J$ could not be generated by an $R/I$--regular sequence. 
\end{rk}

\section{The monomial case}\label{the monomial case section}

Here, we present another instance where Conjecture \ref{conjecture length Tor} holds; namely, in the monomial setting. More precisely, the aim of this section is to show that Conjecture \ref{conjecture length Tor} holds in case both ideals $I$ and $J$ are monomial ideals generated by pure powers of the variables in a polynomial ring.


\begin{nt}\label{notation in the monomial setting}
Hereafter in this section, let $\mathbb{K}$ be a field, let $R=\mathbb{K}[x_1,\ldots,x_n]$ be the polynomial ring in $n\geq 1$ variables over $\mathbb{K}$, let $(a_1,\ldots,a_k)\in\mathbb{N}^k$ (for some $1\leq k\leq n$) and let $(b_1,\ldots,b_n)\in\mathbb{N}^n.$ Assume also that $a_j\neq 0$ for all $1\leq j\leq k$ and that $b_l\neq 0$ for all $1\leq l\leq n.$ Finally, set $I:=(x_1^{a_1},\ldots,x_k^{a_k}),\ J:=(x_1^{b_1},\ldots,x_n^{b_n})$.
\end{nt}

Keeping in mind the previous notation, the main result of this section is the following:

\begin{teo}\label{the conjecture in the monomial case}
Preserving Notation \ref{notation in the monomial setting}, one has that
\[
\ell_R (\Tor_i^R (R/I,R/J))=\binom{k}{i}\ell_R\left(\frac{R}{I+J}\right).
\]
\end{teo}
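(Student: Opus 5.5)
The plan is to reduce the computation to one variable at a time, using the tensor product structure of the Koszul complex. Since both ideals are generated by pure powers of distinct variables, the Koszul complex $K_\bullet(x_1^{a_1},\ldots,x_k^{a_k};R)$ is a free resolution of $R/I$. This gives
\[
\Tor_i^R(R/I,R/J)\cong H_i\bigl(K_\bullet(x_1^{a_1},\ldots,x_k^{a_k};R)\otimes_R R/J\bigr).
\]
Moreover, we can write
\[
R/J\cong \bigotimes_{\mathbb{K}}{}_{l=1}^{n} \mathbb{K}[x_l]/(x_l^{b_l}),
\]
and the Koszul complex splits accordingly. Concretely,
\[
K_\bullet(x_1^{a_1},\ldots,x_k^{a_k};R)\otimes_R R/J\;\cong\;\Bigl(\bigotimes_{j=1}^{k} C^{(j)}_\bullet\Bigr)\otimes_{\mathbb{K}} \bigotimes_{l=k+1}^{n}\mathbb{K}[x_l]/(x_l^{b_l}),
\]
where $C^{(j)}_\bullet$ is the two-term complex $0\to A_j\xrightarrow{x_j^{a_j}}A_j\to 0$ with $A_j=\mathbb{K}[x_j]/(x_j^{b_j})$. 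The whole tensor product is taken over $\mathbb{K}$.

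Next, I would apply the Künneth formula over the field $\mathbb{K}$. This shows that $\Tor_i^R(R/I,R/J)$ is the degree-$i$ part of the tensor product of the homologies $H_\bullet(C^{(j)})$, tensored with the last $n-k$ factors. Each $C^{(j)}$ has homology only in degrees $0$ and $1$, and I would check that these two homologies have the same $\mathbb{K}$-dimension $c_j:=\min(a_j,b_j)$. Indeed, $H_0=\mathbb{K}[x_j]/(x_j^{\min(a_j,b_j)})$, and $H_1=\ker(x_j^{a_j})$ on $A_j$, which is $(x_j^{\max(b_j-a_j,0)})/(x_j^{b_j})$ and also has dimension $\min(a_j,b_j)$. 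Alternatively, this equality is the one-variable case of Proposition \ref{just a half of the Tors}, or it follows because the Euler characteristic of $C^{(j)}$ vanishes.

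The count then follows directly. By Künneth,
\[
\dim_{\mathbb{K}}\Tor_i^R(R/I,R/J)=\sum_{|S|=i,\ S\subseteq\{1,\ldots,k\}}\ \prod_{j=1}^k c_j\prod_{l>k}b_l=\binom{k}{i}\prod_{j\le k}\min(a_j,b_j)\prod_{l>k}b_l .
\]
For each subset $S$, the factors with $j\in S$ contribute $H_1$ and the others contribute $H_0$; since both have dimension $c_j$, every term is the same product. Finally, $R/(I+J)$ is the monomial quotient by $(x_j^{\min(a_j,b_j)})_{j\le k}$ and $(x_l^{b_l})_{l>k}$, so its length is exactly $\prod_{j\le k}\min(a_j,b_j)\prod_{l>k}b_l$. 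All the modules involved are finite-length modules supported at $\mathfrak{m}$ with residue field $\mathbb{K}$, so $R$-length equals $\mathbb{K}$-dimension, and the theorem follows. By Proposition \ref{graded conjecture tor length and base change}, the same conclusion also holds in the localized setting.

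The main point needing care is the Künneth step. It requires identifying the $R$-module tensor product with a $\mathbb{K}$-tensor product of one-variable complexes. This works because $R=\bigotimes_{\mathbb{K}}\mathbb{K}[x_l]$, each generator involves a single variable, and Koszul complexes of disjoint sequences tensor together. Once this identification is made, Künneth over a field applies without Tor correction terms.
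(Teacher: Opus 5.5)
Your proof is correct and follows essentially the same route as the paper. Both arguments write the complex computing $\operatorname{Tor}$ as a $\mathbb{K}$-tensor product of one-variable complexes, apply K\"unneth over $\mathbb{K}$, and use that $H_0$ and $H_1$ of each $0\to\mathbb{K}[x_j]/(x_j^{b_j})\xrightarrow{x_j^{a_j}}\mathbb{K}[x_j]/(x_j^{b_j})\to 0$ have the same dimension $\min(a_j,b_j)$. The only cosmetic difference is for $l>k$: you keep $\mathbb{K}[x_l]/(x_l^{b_l})$ as a module in degree zero, while the paper uses its two-term resolution $\mathbb{K}[x_l]\xrightarrow{x_l^{b_l}}\mathbb{K}[x_l]$, which has the same homology.
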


\begin{proof}
\ First, we set the following notation.
\begin{align*}
& S_j:=\mathbb{K}[x_j],\ 1\leq j\leq n,\\
& M_j:=\begin{cases}
\dfrac{S_j}{(x_j^{a_j})},\text{ if }1\leq j\leq k,\\
S_j,\text{ if }k+1\leq j\leq n.
\end{cases}\\
& N_j:=
\dfrac{S_j}{(x_j^{b_j})},\ 1\leq j\leq n.
\end{align*}
Then, we have the following $\mathbb{K}$--vector space isomorphisms:
\[
\dfrac{R}{I}\cong\bigotimes_{j=1}^n M_j,\quad \dfrac{R}{J}\cong\bigotimes_{j=1}^n N_j.
\]
Now, fix $1\leq j\leq k$. We have the following short exact sequence of $S_j$--modules:
\[
\xymatrix{0\ar[r]& S_j\ar[r]^-{\cdot x_j^{a_j}}& S_j\ar[r]& M_j\ar[r]& 0.}
\]
Tensoring this short exact sequence with $N_j$ one obtains, for any integer $1\leq j\leq k$, the complex
\[
(C_j)_{\bullet}:\ \xymatrix{0\ar[r]& N_j\ar[r]^-{\cdot x_j^{a_j}}& N_j\ar[r]& 0.}
\]
On the other hand, for $k+1\leq j\leq n$, we define $(C_j)_{\bullet}$ as the complex
\[
\xymatrix{0\ar[r]& S_j\ar[r]^-{x_j^{b_j}}& S_j\ar[r]& 0.}
\]
Now, fix an integer $1\leq j\leq k$. The complex $(C_j)_{\bullet}$ has non-zero homology at degrees $0$ and $1$. Indeed, on the one hand the zeroth homology of this complex has length
\[
\ell\left(\dfrac{N_j}{x_j^{a_j}N_j}\right)=\min (a_j,b_j).
\]
On the other hand, its first homology has the same length, because $(0:_{N_j} x_j^{a_j})$ is spanned by the final $\min (a_j,b_j)$ monomials modulo $x_j^{b_j}$. Moreover, for $j>k$, the complex $(C_j)_{\bullet}$ has only homology at degree zero, with length $b_j$. Recall also that the full Koszul complex is the tensor product over $\mathbb{K}$ of these one variable complexes \cite[end of page 46]{BrunsHerzog1993}.

Now, we define the following complexes
\[
A_{\bullet}:=\bigotimes_{j=1}^k (C_j)_{\bullet},\quad B_{\bullet}:=\bigotimes_{j=k+1}^n (C_j)_{\bullet}.
\]
By the previous remarks, it is clear that, for any $0\leq i\leq n$,
\[
\Tor_i^R \left(\dfrac{R}{I},\dfrac{R}{J}\right)=H_i\left(A_{\bullet}\otimes B_{\bullet}\right).
\]
Now, by the Kunneth formula in the form displayed in \cite[Corollary 10.84]{Rotman2009}, we have a $\mathbb{K}$--vector space isomorphism
\begin{equation}\label{Kunneth formula: try 1}
\Tor_i^R \left(\dfrac{R}{I},\dfrac{R}{J}\right)\cong\bigoplus_{p+q=i}\left(H_p\left(A_{\bullet}\right)\otimes H_q (B_{\bullet})\right).
\end{equation}
Now, let us first focus on the homology of the complex $B_{\bullet}$. By combining the Kunneth formula with the definition of $B_{\bullet}$ we have that
\[
H_q(B_{\bullet})\cong\bigoplus_{s_{k+1}+\ldots+s_n=q}\left(\bigotimes_{j=k+1}^n H_{s_j}((C_j)_{\bullet})\right)=\begin{cases}
0,\text{ if }q\neq 0,\\
\bigotimes_{j=k+1}^n H_0\left((C_j)_{\bullet}\right),\text{ if }q=0.
\end{cases}
\]
Taking into account this isomorphism, we can rewrite \eqref{Kunneth formula: try 1} as follows.
\begin{equation}\label{Kunneth formula: try 2}
\Tor_i^R \left(\dfrac{R}{I},\dfrac{R}{J}\right)\cong H_i\left(A_{\bullet}\right)\otimes\left(\bigotimes_{j=k+1}^n H_0\left((C_j)_{\bullet}\right)\right).
\end{equation}
Now, we focus on the homology of $A_{\bullet}$. Since each complex $(C_j)_{\bullet}$ has only non--zero homologies at degrees zero and one, and both have the same length, we have the following isomorphism of $\mathbb{K}$--vector spaces, where by convention $\binom{k}{i}=0$ for $k>i$:
\[
H_i\left(A_{\bullet}\right)\cong\left(\bigotimes_{j=1}^k H_0 ((C_j)_{\bullet})\right)^{\oplus\binom{k}{i}}.
\]
In this way, by combining this last isomorphism with \eqref{Kunneth formula: try 2} and the foregoing calculations of lengths, we obtain that
\[
\ell\left(\Tor_i^R\left(\dfrac{R}{I},\dfrac{R}{J}\right)\right)=\ell (H_i (A_{\bullet}))\cdot\ell (H_0 (B_{\bullet}))=\binom{k}{i}\prod_{j=1}^k\min (a_j,b_j)\prod_{j=k+1}^n b_j.
\]
On the other hand, since
\[
I+J=(x_1^{\min(a_1,b_1)},\ldots,x_k^{\min (a_k,b_k)},x_{k+1}^{b_{k+1}},\ldots,x_n^{b_n}),
\]
hence
\[
\ell\left(\dfrac{R}{I+J}\right)\prod_{j=1}^k\min (a_j,b_j)\prod_{j=k+1}^n b_j.
\]
This concludes the proof.
\end{proof}

\section{Some counterexamples}\label{section: some counterexamples}

The goal of this section is to exhibit two counterexamples to Conjecture \ref{conjecture length Tor} showing that, in general, this conjecture does not always hold.

Our first example also shows that the conjecture is not true, in general, when either $I$ or $J$ is generated by a regular sequence given by binomials. The calculations in the following examples were done with Macaulay2 \cite{M2}.

\begin{ex}
Let $S=\mathbb{Q}[x,y,z]$ be a polynomial ring in three variables over the rational numbers, set $\mathfrak{m}=(x,y,z),$ and set $R:=S_{\mathfrak{m}},$ which is a regular local ring. Consider the ideals of $R$
\[
I=(x^2,y^2,z^2),\ J=(xy+x^2,xz+z^2,yz+y^2).
\]
In this case, one has that
\[
\ell\left(\Tor_1^R (R/I,R/J)\right)=14\neq 12=\binom{3}{1}\ell\left(\frac{R}{I+J}\right).
\]
\end{ex}

Our second example is the following one. In this example, there is defect not only in one Tor length, but in two.

\begin{ex}
Let $S=\mathbb{Q}[x,y,z,w]$ be a polynomial ring in four variables over the rational numbers, set $\mathfrak{m}=(x,y,z,w),$ and set $R:=S_{\mathfrak{m}},$ which is a regular local ring. Consider the ideals of $R$
\[
I=(x^2,y^2,z^2,w^2),\ J=\left(\frac{\partial h_3}{\partial x},\frac{\partial h_3}{\partial y},\frac{\partial h_3}{\partial z},\frac{\partial h_3}{\partial w}\right),
\]
where $h_3$ denotes the complete symmetric polynomial of degree $3.$ We know, thanks to \cite[Lemma 3.1]{Kumarregularsequences}, that $J$ is generated by an $R$--regular sequence. However, one has that
\begin{align*}
& \ell\left(\Tor_1^R (R/I,R/J)\right)=30\neq 28=\binom{4}{1}\ell\left(\frac{R}{I+J}\right),\\
& \ell\left(\Tor_2^R (R/I,R/J)\right)=46\neq 42=\binom{4}{2}\ell\left(\frac{R}{I+J}\right).
\end{align*}
\end{ex}

\section{Application to singularity theory}\label{section: singularity theory}

As we have already explained in the Introduction of this manuscript, this section can be read independently of the rest of the paper. Our motivation here is to explain why Conjecture \ref{conjecture length Tor} has some interest from the perspective of Singularity Theory. In this section, $\mathbb{C}\{x_1,\ldots,x_n\}$ will denote the ring of convergent power series with coefficients in the field of complex numbers.








Let $(X,0) \subset (\mathbb{C}^{n},0)$ be a reduced analytic variety, and let $f$ be a function germ. The Bruce--Roberts number of $f$ with respect to $(X,0)$, denoted $\mu_{\operatorname{BR}}(f,X)$, was defined in \cite{bruce1988critical} as the length of the quotient $\mathbb{C}\{x_1,\ldots,x_n\}/tf(\Theta_{X})$, where  
\[
\Theta_{X} := \{\xi \in \Theta_{n} \mid \xi(h) \in I_{X} \}
\]  
is the set of vector fields that are tangent to $(X,0)$, and $\Theta_n$ denotes the $\mathbb{C}\{x_1,\ldots,x_n\}$--module of germs of vector fields on $(\mathbb{C}^n,0)$. The notation $\Theta_{X}$ is also commonly written as $\operatorname{Der}{(-\log)}(X)$.
This number is a generalization of the Milnor number of a function germ.  If $X = \mathbb{C}^{n}$, then $\Theta_{X} = \Theta_{n}$, and in this case, $\mu_{\operatorname{BR}}(f,X) = \mu(f)$. In general, the Bruce--Roberts number of $f$ with respect to $(X,0)$ is greater than or equal to the Milnor number of $f$. 

The first relation between these numbers was presented in \cite[Theorem 3.1]{nuno2013bruce}, where $(X,0)$ is a weighted homogeneous hypersurface with an isolated singularity. In this case,  
\begin{equation}\label{Bruce roberts and milnor number for an isolated singularity}
\mu_{\operatorname{BR}}(f,X) = \mu(f) + \mu(X \cap f^{-1}(0)),
\end{equation}
where $\mu(X \cap f^{-1}(0))$ is the Milnor number of the ICIS $(X \cap f^{-1}(0),0)$, as defined in \cite{hamm1971lokale, hamm2006topology}.

In \cite[Corollary 4.1]{nuno2020bruce}, \eqref{Bruce roberts and milnor number for an isolated singularity} was extended to any isolated hypersurface singularity. In this case,  
\begin{equation}\label{Bruce roberts for any isolated hypersurface}
\mu_{\operatorname{BR}}(f,X) = \mu(f) + \mu(X \cap f^{-1}(0)) + \mu(X,0) - \tau(X,0),
\end{equation}  
where $\mu$ and $\tau$ denote respectively the Milnor and Tjurina numbers of $(X,0)$. \eqref{Bruce roberts for any isolated hypersurface} can be regarded as a natural extension of \eqref{Bruce roberts and milnor number for an isolated singularity}. In fact, an isolated hypersurface singularity is weighted homogeneous if and only if the equality $\mu(X,0) = \tau(X,0)$ holds (see \cite{saito1971quasihomogene}).

One class of varieties that extends the concept of an isolated hypersurface singularity is the class of isolated complete intersection singularity (ICIS). Considering any ICIS, it was raised the following conjecture (see \cite{conjectureTor}):

\begin{con}\label{BRconjecture}
Let $(X,0) \subset (\mathbb{C}^{n},0)$ be an ICIS of codimension $0 < k < n$, and let $f$ be a function germ such that $\mu_{\operatorname{BR}}(f,X) < \infty$. Then,  
\[
\mu_{\operatorname{BR}}(f,X) = \mu(f) + \mu(X \cap f^{-1}(0),0) + \mu(X,0) - \tau(X,0) + (k-1) \dim_{\mathbb{C}} \left(\frac{\mathbb{C} \{x_1, \ldots, x_n\}}{J(f) + I_X}\right).
\]
\end{con}
Conjecture \ref{BRconjecture} is an extension of the relation that satisfies an isolated hypersurface singularity. In \cite[Theorem 4.7]{conjectureTor}, it was proved that  
\begin{align*}
\mu_{\operatorname{BR}}(f,X) &= \mu(f) + \mu(X \cap f^{-1}(0),0) + \mu(X,0) - \tau(X,0) - \dim_{\mathbb{C}} \left(\frac{\mathbb{C} \{x_1, \ldots, x_n\}}{J(f) + I_{X}}\right) \\
& + \dim_{\mathbb{C}} \operatorname{Tor}_{1} \left( \frac{\mathbb{C} \{x_1, \ldots, x_n\}}{I_X}, \frac{\mathbb{C} \{x_1, \ldots, x_n\}}{J(f)} \right),
\end{align*}
for any ICIS $(X,0)$ of codimension $0 < k < n$, and any analytic function germ $f$ such that $\mu_{\operatorname{BR}}(f,X) < \infty$.

To conclude the proof of Conjecture \ref{BRconjecture}, it is necessary to prove the equality  
\[
\dim_{\mathbb{C}} \operatorname{Tor}_{1} \left( \frac{\mathbb{C} \{x_1, \ldots, x_n\}}{I_X}, \frac{\mathbb{C} \{x_1, \ldots, x_n\}}{J(f)} \right) = k \dim_{\mathbb{C}} \left(\frac{\mathbb{C} \{x_1, \ldots, x_n\}}{J(f) + I_{X}}\right).
\]

With this goal in mind,  Conjecture \ref{conjecture length Tor} was formulated in \cite{conjectureTor}. In section \ref{section: some counterexamples}, we have presented counterexamples to Conjecture \ref{conjecture length Tor}. Since in all of them the ideal $I$ is generated by a regular sequence of length $n$, we can now present a more restrictive conjecture that is still relevant in the context described in this section.

\begin{con}\label{conjecture length Tor2}
Let $(R,\mathfrak{m})$ be a regular local ring of dimension $n$, let $I$ be an ideal generated by an $R$--regular sequence of length $k \leq n-1$, and let $J$ be an ideal generated by a regular sequence of length $n$. Then, for any $0\leq i\leq k$, we have that
\[
\ell\left(\operatorname{Tor}_i^R (R/I, R/J)\right) = \binom{k}{i} \ell\left(\frac{R}{I+J}\right).
\]
\end{con}

\begin{rk}
Keeping in mind the content of our new restricted Conjecture \ref{conjecture length Tor2}, we want to single out the following facts.

\begin{enumerate}[(i)]

\item\ As noted in Section \ref{section: some positive results}, the case where the ideal $I$ is generated by a regular sequence of length $k=2$ has already been proven.

\item\ The case $k=1$ can be proved as follows. Set $A:=R/J$, which is an Artinian complete intersection ring, and let $f\in R$ be a generator of $I$. In this case, we have the following exact sequence:
\[
\xymatrix{0\ar[r]& (0:_A f)\ar[r]& A\ar[r]^-{\cdot f}& A\ar[r]& \dfrac{A}{fA}\cong\dfrac{R}{I+J}\ar[r]& 0.}
\]
From this short exact sequence we deduce that
\[
\ell(\Tor_1^R (R/I,R/J))=\ell ((0:_A f))=\ell\left(\dfrac{A}{fA}\right)=\ell\left(\dfrac{R}{I+J}\right),
\]
which is what we want to show.

\item\ As we have already seen along the paper (see Section \ref{section: some positive results}), by imposing certain conditions on the ideal $I$ we have established Conjecture \ref{conjecture length Tor2} in some particular cases.

\end{enumerate}
\end{rk}

\section{Raising more general conjectures}\label{more general conjecture section}

In this paper, we see that Conjecture \ref{conjecture length Tor} is in general false. However, in all our counterexamples we always have the following lower inequality:
\[
\ell\left(\Tor_i^R (R/I,R/J)\right)\geq \binom{k}{i}\ell\left(\frac{R}{I+J}\right).
\]
This motivates us to formulate the following conjecture, that might be regarded as a variant of the Buchsbaum--Eisenbud--Horrocks problem in the local case.

\begin{quo}\label{generalized BEH: try two}
Let $(R,\mathfrak{m})$ be a regular local ring of dimension $n,$ let $I$ be an ideal of $R$ that contains a regular sequence of length $k\leq n,$ and let $J$ be an ideal of $R$ that contains a regular sequence of length $n$ such that $I+J$ is $\mathfrak{m}$--primary. Let $0\leq i\leq k$. Is it true that
\[
\ell\left(\Tor_i^R (R/I,R/J)\right)\geq\binom{k}{i}\ell\left(\frac{R}{I+J}\right)?
\]
\end{quo}

We want to observe that, in Question \ref{generalized BEH: try two} we cannot weaken the regular sequence containment assumption on $J$, as the next example shows.

\begin{ex}\label{outside ci not true in general}
Let $S=\mathbb{Q}[x,y,z,w],$ let $\mathfrak{m}=(x,y,z,w),$ set $R:=S_{\mathfrak{m}},$ and
\[
I:=(x^3,y^3,z^3,w^3)R,\ J:=(x(x+y+z),y(x+y+w),z(x+z+w),w(y+z+w))R.
\]
One can check that $\sqrt{I+J}=(x,y,z,w)R,$ that $I$ is a complete intersection, and that $J$ is an almost complete intersection (it has grade $3$ and $4$ minimal generators). In this case,
\begin{align*}
& \ell(\Tor_1 (R/I,R/J))=42<48=\binom{4}{1}\cdot 12=\binom{4}{1}\cdot\ell\left(\frac{R}{I+J}\right),\\
& \ell(\Tor_2 (R/I,R/J))=51<72=\binom{4}{2}\cdot 12=\binom{4}{2}\cdot\ell\left(\frac{R}{I+J}\right).
\end{align*}
\end{ex}

The next example may be regarded as a piece of evidence for Question \ref{generalized BEH: try two}.

\begin{ex}
Coming back to Example \ref{outside ci not true in general}, setting now
\[
J:=(x^3,y^3,z^3,w^3)R,\ I:=(x(x+y+z),y(x+y+w),z(x+z+w),w(y+z+w))R.
\]
and keeping in mind Question \ref{generalized BEH: try two}, we have
\begin{align*}
& \ell(\Tor_1 (R/I,R/J)))=42>36=\binom{3}{1}\cdot 12=\binom{3}{1}\cdot\ell\left(\frac{R}{I+J}\right),\\
& \ell(\Tor_2 (R/I,R/J)))=51>36=\binom{3}{2}\cdot 12=\binom{3}{2}\cdot\ell\left(\frac{R}{I+J}\right),\\
& \ell(\Tor_3 (R/I,R/J)))=25>12=\binom{3}{3}\cdot 12=\binom{3}{3}\cdot\ell\left(\frac{R}{I+J}\right).
\end{align*}
\end{ex}
We can also formulate a local variant of the so--called Total Rank Conjecture.

\begin{quo}\label{generalized TRC}
Let $(R,\mathfrak{m})$ be a regular local ring of dimension $n,$ let $I$ be an ideal of $R$ that contains a regular sequence of length $k\leq n,$ and let $J$ be an ideal of $R$ that contains a regular sequence of length $n$ such that $I+J$ is $\mathfrak{m}$--primary. Is it true that
\[
\sum_{i=0}^{n}\ell\left(\Tor_i^R (R/I,R/J)\right)\geq 2^k\ell\left(\frac{R}{I+J}\right)?
\]
\end{quo}
Notice that Question \ref{generalized TRC} is not covered by the results obtained by VandeBogert and Walker in \cite{WalkerTRCeven} because the tensor product of the Koszul complexes of two complete intersection ideals is not a priori concentrated in degrees $[0,\dim (R)+2]$.

\section*{Acknowledgements}
This project started when Alberto F. Boix and Barbara K. Lima--Pereira attended the IberoSing International Workshop 2022 celebrated at the Universidad Complutense de Madrid (Spain). We want to thank the organizers of the event, Pedro Gonz\'alez P\'erez, Patricio Almir\'on, Pablo Portilla and Juan Viu--Sos. We also want to thank Srikanth Iyengar and Mark Walker for their interest on this project, and for suggesting some of the examples we included in Section \ref{section: some counterexamples}.  We would also like to thank the referee for several useful comments and remarks that have substantially improved the contents and the correctness of this manuscript. Alberto F. Boix received partial support by grant PID2022-137283NB-C22 funded by  MICIU/AEI/10.13039/501100011033. Bárbara K. Lima-Pereira was supported by FAPESP grants 2022/08662-1 and 2023/04460-8.

\bibliographystyle{alpha}
\bibliography{AFBoixReferences}

\end{document}